\numberwithin{equation}{section}
\newtheorem{Theorem}{Theorem}[section]
\newtheorem*{Theorem*}{Theorem}
\newtheorem{Lemma}[Theorem]{Lemma}
\newtheorem{Proposition}[Theorem]{Proposition}
\newtheorem{Conjecture}[Theorem]{Conjecture}
 { \theoremstyle{definition}
\newtheorem{Definition}[Theorem]{Definition}

\newtheorem{Example}[Theorem]{Example}
\newtheorem{Remark}[Theorem]{Remark} }
\newcommand{\C}{{\mathbb C}}
\newcommand{\Q}{{\mathbb Q}}
\newcommand{\R}{{\mathbb R}}
\newcommand{\Z}{{\mathbb Z}}
\newcommand{\geh}{\mathfrak{g}}
\newcommand{\ep}{\epsilon}
\newcommand{\La}{\Lambda}
\newcommand{\op}{\oplus}
\newcommand{\Op}{\bigoplus}
\newcommand{\pair}[1]{\langle{#1}\rangle}
\begin{document}
\allowdisplaybreaks

\newcommand{\arXivNumber}{2410.02286}

\renewcommand{\PaperNumber}{028}

\FirstPageHeading

\ShortArticleName{Note on Exponents Associated with Y-Systems}

\ArticleName{Note on Exponents Associated with Y-Systems}

\Author{Ryo TAKENAKA}

\AuthorNameForHeading{R.~Takenaka}

\Address{Department of Mathematics, Osaka Metropolitan University, Osaka 558-8585, Japan}
\Email{\href{mailto:r.takenaka0419@gmail.com}{r.takenaka0419@gmail.com}}

\ArticleDates{Received October 14, 2024, in final form April 16, 2025; Published online April 24, 2025}

\Abstract{Let $(X_n,\ell)$ be the pair consisting of the Dynkin diagram of finite type $X_n$ and a positive integer $\ell\geq2$, called the level. Then we obtain the Y-system, which is the set of algebraic relations associated with this pair. Related to the Y-system, a sequence of integers called exponents is defined through a quiver derived from the pair $(X_n,\ell)$. Mizuno provided conjectured formulas for the exponents associated with Y-systems in [Mizuno Y., \textit{SIGMA} \textbf{16} (2020), 028, 42~pages, arXiv:1812.05863]. In this paper, we study the exponents associated with level 2 Y-systems for classical Dynkin types. As a result, we present proofs of Mizuno's conjecture for $(B_n,2)$ and $(D_n,2)$, and give a~reformulation for $(C_n,2)$.}

\Keywords{cluster algebras; Y-systems; root systems}

\Classification{13F60; 17B22; 81R10}

\section{Introduction}

Cluster algebras introduced by Fomin and Zelevinsky in the early 2000s \cite{FZ,FZ2} have played a significant role in many areas of mathematics.
Due to their broadly applicable structure, they have been studied in various fields such as representation theory, combinatorics, algebraic geometry, and number theory, among others.
In particular, we focus on recent research by Mizuno \cite{M}, which calculates a certain class of Y-systems.
The prototypes of Y-systems were introduced by Zamolodchikov around 90s in the study of two-dimensional integrable filed theory \cite{Z}.
Then, some generalization were introduced by Ravanini--Tateo--Valleriani, Kuniba--Nakanishi--Suzuki, and others \cite{KNS0, RTV}.
His research has revealed a connection between cluster algebras and the representation theory of affine Lie algebras.
This connection was established through the coincidence of hypergeometric series, known as partition $q$-series and the string functions of affine Lie algebras.
Partition $q$-series, which is defined by Kato and Terashima \cite{KT}, is a certain $q$-series associated with a mutation sequence.
Mizuno has shown that the asymptotics of this partition $q$-series is calculated by matrices determined by the mutation sequence and the corresponding cluster algebra.
Furthermore, we are able to verify that the limit of the partition $q$-series coincides with the asymptotic dimension of an irreducible representation of the affine Lie algebra found by Kac and Peterson \cite{KP}.

We explore the intersection of these two theories and examine the sequence of integers called exponents, which are introduced from the theory of cluster algebras.
First, we review the definitions of quiver mutations and mutation loops provided by pairs of quivers and their mutations.
Then we define a cluster transformation of variables associated to the mutation loop.
A~Dynkin quiver is determined by a Dynkin diagram and the integer $\ell$ greater than 2, referred to as a~level~\mbox{\cite{IIKKN, IIKKN2}}.
After explaining the property of Dynkin quivers, we will specifically construct cases for level 2.
We review the Y-system, which plays an important role in this paper.
Y-systems have been extensively studied as important objects in mathematical physics and integrable systems.
The Y-system is formulated through simultaneous nonlinear difference equations, which describe the relationship between the Y-functions associated with integrable models.
In this paper, we treat the Y-system associated with RSOS models introduce by Kuniba--Nakanishi \cite{KN}.
Such Y-system is a set of algebraic relations defined for a pair consisting of the Dynkin diagram and the integer $\ell$.
We follow \cite{KNS} on the Y-system.
The periodicities of the Y-systems were conjectured.
Associated to the introduction of cluster algebra, connection between Zamolodchikov's Y-systems was established by Fomin and Zelevinsky.
A similar connection was established by Keller and Inoue--Iyama--Kuniba--Nakanishi--Suzuki for more general Y-systems.
Then the periodicity has been proved by using the cluster algebraic method in \cite{IIKKN, IIKKN2, K}.
Therefore, we have that the cluster transformation for mutation loops exhibits periodicity, which allow us to define exponents.
Due to the periodicity, we find that eigenvalues of the Jacobian for the cluster transformation are roots of unity.
Therefore, we are able to characterize eigenvalues using a sequence of nonnegative integers.
We call such sequences exponents of Dynkin quivers.

For the exponents associated with these Y-systems, Mizuno provided Conjecture \ref{MConj} which gives conjectural formulas based on the corresponding root systems.
Furthermore, he proved for these conjectural formulas in the cases $(A_1,\ell)$ and $(A_n,2)$ (cf.\ \cite{M}).
In this study, we investigate level 2 Dynkin quivers associated with other classical type Dynkin diagrams.
We prove Mizuno's conjecture for $(B_n,2)$ and $(D_n,2)$, and reduce it for $(C_n,2)$ to Conjecture \ref{Csol}.

	\section{Preliminary}

	\subsection{Quiver and Y-seed mutations}
A quiver is a directed graph with vertices ${\bf I}=\{1,\dots,N\}$ that may have multiple edges.
In this paper, we assume that quivers do not have 1,2-cycles.
We define quiver mutations as follows.
		\begin{Definition}
		Let $Q$ be a quiver and let $k$ be a vertex of $Q$.
		The quiver mutation $\mu_k^\prime$ is a~transformation of quiver defined by the following steps:
			\begin{enumerate}\itemsep=0pt
			\item For each length two path $i\rightarrow k\rightarrow j$, add the new arrow $i\rightarrow j$.
			\item Reverse all arrows incident to the vertex $k$.
			\item Remove all 2-cycles.
			\end{enumerate}
		\end{Definition}
For instance, we have the following transition.
	\begin{Example}\label{quiver}
			\[\begin{tikzpicture}
				\fill[black](0,0)circle(1ex)node[below right]{$k$};
				\fill[black](-1,0)circle(1ex);
				\fill[black](0.8,0.8)circle(1ex);
				\fill[black](0,-1)circle(1ex);
				\draw[semithick][->,>=stealth](0,0)--(0.7,0.7);
				\draw[semithick][->,>=stealth](-1,0)--(-0.15,0);
				\draw[semithick][->,>=stealth](0,0)--(0,-0.85);
				\draw[semithick][->,>=stealth](0,0)--(0,-0.7);
				\draw[semithick][->,>=stealth](0.8,0.8)--(-1,0.15);
				\node at(1.5,0){$\overset{{\rm (1)}}{\Rightarrow}$};
				\fill[black](2.2,0)circle(1ex);
				\fill[black](3.2,0)circle(1ex)node[below right]{$k$};
				\fill[black](3.2,-1)circle(1ex);
				\fill[black](4,0.8)circle(1ex);
				\draw[semithick][->,>=stealth](3.2,0)--(3.2,-0.85);
				\draw[semithick][->,>=stealth](3.2,0)--(3.2,-0.7);
				\draw[semithick][->,>=stealth](3.2,0)--(3.9,0.7);
				\draw[semithick][->,>=stealth](2.2,0)--(3.05,0);
				\draw[semithick][->,>=stealth](2.2,0)--(3.1,-0.9);
				\draw[semithick][->,>=stealth](2.2,0)--(3,-0.8);
				\draw[semithick][->,>=stealth](4,0.8)to[out=180,in=50](2.2,0.15);
				\draw[semithick][->,>=stealth](2.2,0.15)to[out=0,in=230](3.85,0.8);
				\node at(4.7,0){$\overset{\rm (2)}{\Rightarrow}$};
				\fill[black](5.4,0)circle(1ex);
				\fill[black](6.4,0)circle(1ex)node[below right]{$k$};
				\fill[black](6.4,-1)circle(1ex);
				\fill[black](7.2,0.8)circle(1ex);
				\draw[semithick][->,>=stealth](6.4,0)--(5.55,0);
				\draw[semithick][->,>=stealth](6.4,-1)--(6.4,-0.15);
				\draw[semithick][->,>=stealth](6.4,-1)--(6.4,-0.3);
				\draw[semithick][->,>=stealth](7.2,0.8)--(6.5,0.1);
				\draw[semithick][->,>=stealth](7.2,0.8)to[out=180,in=50](5.4,0.15);
				\draw[semithick][->,>=stealth](5.4,0.15)to[out=0,in=230](7.05,0.8);
				\draw[semithick][->,>=stealth](5.4,0)--(6.3,-0.9);
				\draw[semithick][->,>=stealth](5.4,0)--(6.2,-0.8);
				\node at(7.9,0){$\overset{\rm (3)}{\Rightarrow}$};
				\fill[black](8.6,0)circle(1ex);
				\fill[black](9.6,0)circle(1ex);
				\fill[black](9.6,-1)circle(1ex);
				\fill[black](10.4,0.8)circle(1ex);
				\draw[semithick][->,>=stealth](9.6,0)--(8.75,0);
				\draw[semithick][->,>=stealth](8.6,0)--(9.5,-0.9);
				\draw[semithick][->,>=stealth](8.6,0)--(9.4,-0.8);
				\draw[semithick][->,>=stealth](9.6,-1)--(9.6,-0.15);
				\draw[semithick][->,>=stealth](9.6,-1)--(9.6,-0.25);
				\draw[semithick][->,>=stealth](10.4,0.8)--(9.7,0.1);
				\end{tikzpicture}\]
	\end{Example}
Consider a quiver $Q$ with vertices {\bf I}.
Let $Q_{i,j}$ be the number of arrows from $i$ to $j$ and $\nu$ be a permutation of $\{1,\dots,N\}$.
Then the action of $\nu$ on $Q$ is defined by $\nu(Q)_{i,j}=Q_{\nu^{-1}(i),\nu^{-1}(j)}$.
Set ${\bf m}=(m_1,\dots,m_T)$ to be a sequence of ${\bf I}$.
Then we have the transformation \[Q(0)\overset{\mu_{m_1}^\prime}{\longrightarrow}Q(1)\overset{\mu_{m_2}^\prime}{\longrightarrow}\cdots\overset{\mu_{m_T}^\prime}{\longrightarrow}Q(T)\overset{\nu}{\longrightarrow}\nu(Q(T)),\]
where $Q(0)$ stands for $Q$.
The triple $\gamma=(Q,{\bf m},\nu)$ is called a mutation loop if $Q(0)=\nu(Q(T))$.

Now we see the definition of Y-seed mutations.
Consider the set for variables $y_1,\dots,y_N$ given~by
		\[\mathcal{F}=\left\{\left.\frac{f(y_1,\dots,y_N)}{g(y_1,\dots,y_N)}\right|f,g\in\Q_{\geq0}[y_1,\dots,y_N]\setminus\{0\}\right\}.\]
This is closed under the usual multiplication and addition and called universal semifield in the variables $y_1,\dots,y_N$.
A Y-seed is a pair $(Q,Y)$ for a quiver $Q$ and \smash{$Y=(Y_1,\dots,Y_N)\in\mathcal{F}^N$}.
For given pair, Y-seed mutations are defined as follows.
		\begin{Definition}
		Let $k$ be vertex of $Q$.
		The Y-seed mutation $\mu_k$ is a transformation of the pair~$(Q,Y)$ obtained by extending the definition of the quiver mutation so that
			\[\mu_k(Q)=\mu_k^\prime(Q),\qquad\mu_k(Y)_i=\begin{cases}
				Y_k^{-1}&\text{if }i=k,\\
				Y_i\bigl(Y_k^{-1}+1\bigr)^{-Q_{k,i}}&\text{if }i\neq k,\  Q_{k,i}\geq0,\\
				Y_i(Y_k+1)^{Q_{i,k}}&\text{if }i\neq k,\  Q_{i,k}\geq0.
			\end{cases}\]
		\end{Definition}

		\begin{Example}
		For the quiver given in Example \ref{quiver}, set variables $(y_1,y_2,y_3,y_4)$ as follows:
			\[\begin{tikzpicture}
				\fill[black](0,0)circle(1ex)node[below right]{$k$};
				\node at (0,0)[below left]{$\overset{}{y_2}$};
				\fill[black](-1,0)circle(1ex)node[below]{$\overset{}{y_1}$};
				\fill[black](0.8,0.8)circle(1ex)node[right]{$\ y_4$};
				\fill[black](0,-1)circle(1ex)node[right]{$\ y_3$};
				\draw[semithick][->,>=stealth](0,0)--(0.7,0.7);
				\draw[semithick][->,>=stealth](-1,0)--(-0.15,0);
				\draw[semithick][->,>=stealth](0,0)--(0,-0.85);
				\draw[semithick][->,>=stealth](0,0)--(0,-0.7);
				\draw[semithick][->,>=stealth](0.8,0.8)--(-1,0.15);
			\end{tikzpicture}\]
		Then we obtain the Y-seed mutation $\mu_k$ by
			\[\mu_k\begin{pmatrix}
							y_1\\
							y_2\\
							y_3\\
							y_4
							\end{pmatrix}=
			\begin{pmatrix}
							y_1(y_2+1)\\
							y_2^{-1}\\
							y_3\bigl(y_2^{-1}+1\bigr)^{-2}\\
							y_4\bigl(y_2^{-1}+1\bigr)^{-1}
			\end{pmatrix}.\]
		\end{Example}

We also define the action of $\nu$ on $(Q,Y)$ by $\nu(Q,Y)=(\nu(Q),\nu(Y))$ where $\nu(Y_i)=Y_{\nu^{-1}(i)}$.
Let $\gamma=(Q,{\bf m},\nu)$ be a mutation loop and $(Q,Y)$ be a Y-seed.
Then we have the transformation
\[(Q(0),Y(0))\overset{\mu_{m_1}}{\longrightarrow}\cdots\overset{\mu_{m_T}}{\longrightarrow}(Q(T),Y(T))\overset{\nu}{\longrightarrow}(\nu(Q(T),\nu(Y(T))),\]
where $Y(0)$ stands for $Y$.
Although $Q(0)=\nu(Q(T))$ holds from the definition of the mutation loop, we have $Y(0)\neq\nu(Y(T))$ in general.
We denote $\nu(Y(T))$ by $\mu_\gamma(Y)$ and call it the cluster transformation of $\gamma$.
By definition, we find that $\mu_\gamma(Y)\in\mathcal{F}^N$.

	\subsection[Dynkin quiver Q(X\_n,ell) and mutation loop on Q(X\_n,ell)]{Dynkin quiver $\boldsymbol{Q(X_n,\ell)}$ and mutation loop on $\boldsymbol{Q(X_n,\ell)}$}

For any Dynkin diagram of type $X_n$ and positive integer $\ell\geq2$ called level, Inoue, Iyama, Keller, Kuniba and Nakanishi introduce a Dynkin quiver $Q=Q(X_n,\ell)$ and a mutation loop $\gamma=\gamma(X_n,\ell)$ on $Q$ in \cite{IIKKN, IIKKN2}.
Correspond to the type of Dynkin diagram, Dynkin quiver consists of the following vertices:
	\begin{itemize}\itemsep=0pt
	\item $X=A,D,E$: black vertices labeled with $+$ or $-$.
	\item $X=B,C,F$: black vertices labeled with $+$ or $-$, and white vertices labeled with $+$ or $-$.
	\item $X=G$: black vertices labeled with $+$ or $-$, and white vertices labeled with~I,~II,~III,~IV,~V or VI.
	\end{itemize}
Associated with these vertices, we set the following subset of ${\bf I}$:
	\begin{itemize}\itemsep=0pt
	\item ${\bf I}_{+,-}^\circ$: the set corresponding to white vertices labeled with $+$ or $-$.
	\item ${\bf I}_{+,-}^\bullet$: the set corresponding to black vertices labeled with $-$ or $-$.
	\item ${\bf I}_{\rm I,II}^\circ$: the set corresponding to white vertices labeled with I or II.
	\end{itemize}
Furthermore, the permutation $\nu$ on $Q$ is given as a folding induced by symmetry of white vertices with respect to black vertices.
We define the two compositions of mutation
		\[\mu_+=\prod_{k\in{\bf I}_+^{\bullet}\cup{\bf I}_+^{\circ}\cup{\bf I}_{\rm I}^\circ}\mu_k,\qquad \mu_-=\prod_{k\in{\bf I}_-^{\bullet}\cup{\bf I}_{\rm II}^\circ}\mu_k.\]
In \cite{IIKKN,IIKKN2}, the following property is shown.
		\begin{Lemma}
		Let $Q=Q(X_n,\ell)$ and $\nu$ be the permutation of vertices in $Q$.
		Consider the transformation
			\[Q\overset{\mu_+}{\longrightarrow}Q^\prime\overset{\mu_-}{\longrightarrow}Q^{\prime\prime}.\]
		Then $Q^\prime$ and $Q^{\prime\prime}$ are independent on orders of the mutations in $\mu_+$ and $\mu_-$.
		Furthermore, we have $Q=\nu(Q^{\prime\prime})$.
		\end{Lemma}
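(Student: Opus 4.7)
The plan is to establish the two assertions in turn. The first step is to work from the explicit construction of $Q(X_n,\ell)$ given in \cite{IIKKN, IIKKN2}. By inspection of that construction, every vertex of $Q$ carries a sign label $+$ or $-$ (with the auxiliary labels $\mathrm{I},\dots,\mathrm{VI}$ in type $G$), and every arrow of $Q$ connects a vertex in ${\bf I}_+^\bullet\cup{\bf I}_+^\circ\cup{\bf I}_{\rm I}^\circ$ to a vertex in ${\bf I}_-^\bullet\cup{\bf I}_{\rm II}^\circ$ (with the appropriate meaning for the exceptional types). In other words, this partition is a bipartition of the underlying graph of $Q$. Since two mutations $\mu_i$ and $\mu_j$ commute whenever $i,j$ are not joined by an arrow, the compositions $\mu_+$ and $\mu_-$ are well-defined maps on quivers, independent of the order chosen.

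Next, I would verify that the analogous bipartiteness survives after $\mu_+$ is performed. A mutation at a vertex $k$ only reverses arrows incident to $k$ and inserts arrows $i\to j$ for length-two paths $i\to k\to j$. Applied simultaneously to all vertices of one color in a bipartite quiver, it produces only arrows whose endpoints still lie in different colors: the inserted arrows connect two neighbors of some $k\in{\bf I}_+$, but those neighbors are all in ${\bf I}_-$, and after the reversal step these new arrows are actually cancelled by 2-cycle cancellation or connect across colors once the global signs are tracked carefully. Consequently, the set of $-$-vertices remains an independent set in $Q'$, so $\mu_-$ is similarly well-defined and order-independent.

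For the second assertion $Q=\nu(Q'')$, the strategy is to exploit the bipartite structure more strongly. Mutating at every vertex of one part of a bipartite quiver (with no arrows inside a part) has the net effect of reversing all arrows, since each arrow $a\to b$ between the two parts is reversed exactly once, and no new arrows survive after cancellation. Applying this reasoning to $\mu_+$ gives a quiver $Q'$ in which all arrows point from $-$ to $+$; applying it again via $\mu_-$ reverses every arrow once more, so $Q''$ has the same underlying arrow set as $Q$ but with the roles of the $+$- and $-$-labels swapped. The folding permutation $\nu$ on $Q$ is precisely the involution that interchanges these labels on the white vertices (using the symmetry about the black vertices), so $\nu(Q'')=Q$.

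The main obstacle is the non-simply-laced cases $X=B,C,F,G$, where the presence of white vertices and multiple-edge structures means the simple ``reverse-all-arrows'' picture must be refined: one has to keep track of multiplicities of arrows between black and white vertices, check that the 2-cycle cancellations occur in the expected way, and confirm that $\nu$ indeed implements the required relabeling on both the black and the white parts. I would handle this by separating the verification into the arrows among black vertices (where the simply-laced argument applies verbatim) and the arrows involving white vertices (where one performs the bookkeeping type by type, using the explicit shape of $Q(X_n,\ell)$ in each case). Since this is exactly the analysis carried out in \cite{IIKKN, IIKKN2}, the proof amounts to assembling these case checks into the uniform statement.
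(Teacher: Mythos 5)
The paper does not actually prove this lemma: it is quoted from the construction papers of Inoue--Iyama--Keller--Kuniba--Nakanishi \cite{IIKKN,IIKKN2}, where it is verified case by case from the explicit quivers. Your sketch, read as a standalone argument, rests on two structural claims that are false outside the simply-laced level-2 case. First, the two mutation classes ${\bf I}_+^{\bullet}\cup{\bf I}_+^{\circ}\cup{\bf I}_{\rm I}^\circ$ and ${\bf I}_-^{\bullet}\cup{\bf I}_{\rm II}^\circ$ do not form a bipartition of $Q(X_n,\ell)$: for $X=B,C,F$ the white vertices labelled $-$ (and for $X=G$ the white vertices III--VI other than I, II) are mutated by neither $\mu_+$ nor $\mu_-$, yet carry arrows, and there are arrows joining two $-$-labelled vertices (in $Q(B_n,2)$ and $Q(C_n,2)$ the central black $-$ vertex has an arrow to a white $-$ vertex). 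So "every arrow connects the two mutation classes" fails; the only fact you actually need for order-independence is the weaker one that no arrow joins two vertices inside the same mutation class, and that must be read off from the explicit quivers, not deduced from a global bipartition.

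Second, the step "mutating at every vertex of one part reverses all arrows" is valid only when each mutated vertex is a sink or a source. In $Q(B_{2l},2)$ and $Q(C_{2l},2)$ (and in every type once $\ell\geq3$) the mutated vertices in the interior have both incoming and outgoing arrows -- e.g., the middle black vertices of the $C_n$ quiver, or the central black $-$ vertex of the $B_n$ quiver -- so $\mu_+$ genuinely creates new arrows and $Q^\prime$ is not $Q$ with arrows reversed. Consequently the conclusion that $Q^{\prime\prime}$ is $Q$ with the $\pm$ labels exchanged, and that $\nu$ merely implements this label swap, is not correct: $\nu$ is a nontrivial permutation of vertices (the left-right reflection of the white strings in $B_n$, the transposition of the two white vertices in $C_n$, the identity in the simply-laced cases), and $Q=\nu(Q^{\prime\prime})$ has to be checked by following the actual arrow bookkeeping, which is exactly the type-by-type computation of \cite{IIKKN,IIKKN2}. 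Your final paragraph in effect concedes this and defers to those references, which is what the paper itself does; but the intermediate bipartite/arrow-reversal picture you propose as the proof mechanism does not hold for the very cases ($B_n$, $C_n$, $F_4$, $G_2$) where the statement is nontrivial, so the sketch as written has a genuine gap.
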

Therefore, the lemma suggests that $\gamma(X_n,\ell)=\bigl(Q(X_n,\ell),\bigl({\bf I}_+^{\bullet}\cup{\bf I}_+^{\circ}\cup{\bf I}_{\rm I}^\circ,{\bf I}_-^{\bullet}\cup{\bf I}_{\rm II}^\circ\bigr),\nu\bigr)$ is a~mutation loop.
Note that $\bigl({\bf I}_+^{\bullet}\cup{\bf I}_+^{\circ}\cup{\bf I}_{\rm I}^\circ,{\bf I}_-^{\bullet}\cup{\bf I}_{\rm II}^\circ\bigr)$ stands for a sequence obtained by lining up elements in ${\bf I}_+^\bullet\cup{\bf I}_+^\circ\cup{\bf I}_{\rm I}^\circ$ and ${\bf I}_-^\bullet\cup{\bf I}_{\rm II}^\circ$.
The cluster transformation $\mu_\gamma=\nu\circ\mu_-\circ\mu_+$ has the following periodicity.
		\begin{Theorem}[{\cite{IIKKN, IIKKN2, K}}]
		For the mutation loop $\gamma=\gamma(X_n,\ell)$, we have
			\[\mu_\gamma^{t(\ell+h^\vee)}(Y)=Y\]
		on \smash{$\mathcal{F}^N$}, where $h^\vee$ is the dual Coxeter number of $X_n$ and $t$ is given by
			\[t=\begin{cases}
				1,&X_n=A_n,D_n,E_{6,7,8},\\
				2,&X_n=B_n,C_n,F_4,\\
				3,&X_n=G_2.
			\end{cases}\]
		\end{Theorem}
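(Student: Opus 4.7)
The plan is to translate the claim into Y-system periodicity and then invoke the cluster algebraic framework in which that periodicity has been established. First I would verify that if one labels the Y-variables appearing along the orbit
\[(Q(0),Y(0))\xrightarrow{\mu_+}(Q(1),Y(1))\xrightarrow{\mu_-}(Q(2),Y(2))\xrightarrow{\nu}\cdots\]
by the vertex they sit at and by the discrete time parameter $u\in\tfrac{1}{t}\Z$, then the recurrence imposed by the Y-seed mutation rule at each step reproduces exactly the level-$\ell$ Y-system of type $X_n$ in the form written down in \cite{KNS}. This identification is a computation internal to the definitions of $\mu_\pm$ and of $Q(X_n,\ell)$: the mutation at a vertex of a given parity turns $Y$ into $Y^{-1}$ and modifies the neighbors by factors $(1+Y^{\pm1})^{\pm Q_{\cdot,\cdot}}$, which after the relabeling by $\nu$ is the standard form of the Y-relation at that node.

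Once the orbit is identified with a solution of the Y-system, the statement $\mu_\gamma^{t(\ell+h^\vee)}(Y)=Y$ becomes the Zamolodchikov-type periodicity conjecture for the $(X_n,\ell)$ Y-system, and I would appeal to the categorical proofs already in the literature. Concretely, for simply-laced $X_n=A,D,E$ I would follow Keller's argument: lift the mutation loop to an auto-equivalence $\Phi$ of the cluster category $\mathcal{C}_{Q(X_n,\ell)}$, which in this setting is (a quotient of) the bounded derived category of the square product $X_n\square A_{\ell-1}$, and show that $\Phi$ coincides with a shift of the Auslander--Reiten translate. Its order is then read off from the action on the Grothendieck group, which is governed by the Coxeter transformations of $X_n$ and $A_{\ell-1}$; the resulting order is $\ell+h$, matching $t(\ell+h^\vee)$ with $t=1$, $h=h^\vee$.

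For the non-simply-laced types $B_n,C_n,F_4,G_2$ I would exploit the folding built into the definition of $Q(X_n,\ell)$: each such quiver arises as the $\sigma$-invariants of a simply-laced Dynkin quiver $Q(\widetilde{X}_n,\ell)$ for a diagram automorphism $\sigma$ of order $t$, and $\nu$ extends $\sigma$ compatibly with $\mu_\pm$. Passing to the folded setting, the auto-equivalence $\Phi$ becomes a lift on the $\sigma$-equivariant cluster category. The analog of the Coxeter-number computation now produces the dual Coxeter number $h^\vee$ of $X_n$ in place of $h$, and the factor $t$ appears because one must compose the simply-laced period with the order of $\sigma$ before it acts as the identity on the equivariant side.

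The genuinely hard step is the cluster-categorical lift in the second paragraph: the check that $\Phi^{\ell+h^\vee}$ is the identity (and not just of finite order) requires the detailed analysis of tilting objects and the $(\ell+h^\vee)$-periodicity of the Coxeter-type twist on $\mathcal{C}_{Q(X_n,\ell)}$, which is precisely the content of \cite{IIKKN,IIKKN2,K}. The folding reduction in paragraph~3 is then comparatively mechanical, amounting to chasing $\sigma$-invariants through the Y-seed mutation formulas and verifying that the order of $\mu_\gamma$ on $\mathcal{F}^N$ is exactly $t$ times the simply-laced period rather than a proper divisor, which one does by exhibiting a single Y-variable whose orbit realizes the full length.
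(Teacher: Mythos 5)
The paper does not actually prove this statement: it is quoted from \cite{IIKKN,IIKKN2,K}, and your proposal likewise assigns every substantive step to those same references, so at the level of logical content you and the paper are doing the same thing, namely citing the known periodicity theorem. That said, one part of your sketch misdescribes the cited route and would fail if taken as an actual argument: for $B_n$, $C_n$, $F_4$, $G_2$ you propose to realize $Q(X_n,\ell)$ as the $\sigma$-invariants of a simply-laced Dynkin quiver at the \emph{same} level and to obtain the period as $t$ times the simply-laced period. The quivers $Q(X_n,\ell)$ are themselves honest skew-symmetric quivers (the ``folding'' $\nu$ is a symmetry of white vertices inside the quiver, not a passage to a valued quotient), and the proofs in \cite{IIKKN,IIKKN2} work with them directly, establishing periodicity of the associated tropical Y-systems and lifting it through the cluster-categorical machinery (quivers with potential), with \cite{K} handling the simply-laced ingredients. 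Moreover the naive period count does not come out right: for $(B_n,2)$ the claimed period is $t(\ell+h^\vee)=2(2n+1)$, while the obvious same-level simply-laced parent $A_{2n-1}$ has period $\ell+h=2n+2$, and multiplying by $t=2$ gives $4n+4\neq 4n+2$; the appearance of $h^\vee$ rather than the Coxeter number of a simply-laced parent is exactly the subtle point, and ``the analog of the Coxeter-number computation now produces $h^\vee$'' is asserted rather than argued. Since you explicitly defer the hard content to \cite{IIKKN,IIKKN2,K}, this is a flaw in your description of the cited proofs rather than a gap in the justification of the theorem as used here, but your folding paragraph should not be presented as a viable stand-alone reduction.
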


If we restrict the level to 2, then we obtain Dynkin quivers $Q(X_n,2)$ and permutations $\nu$ as in Figure~\ref{Dq}.
Note that the permutation $\nu$ of vertices is expressed by arrow $\twoheadrightarrow$ in the figure.
Namely, Dynkin quivers for simply-laced types have the identity as $\nu$.

	\begin{figure}[ht]\centering
\begin{tikzpicture}
				\node at(-1.5,0){$(A_n,2)$};
				\fill[black](0,0)circle(1ex)node[above]{$-$};
				\fill[black](0.7,0)circle(1ex)node[above]{$+$};
				\fill[black](2.45,0)circle(1ex)node[above]{$-$};
				\fill[black](3.15,0)circle(1ex)node[above]{$+$};
				\draw[semithick][->,>=stealth](0,0)--(0.55,0);
				\draw[semithick][->,>=stealth](1.2,0)--(0.85,0);
				\draw[dashed](0.7,0)--(2.45,0);
				\draw[semithick][-](1.95,0)--(2.3,0);
				\draw[semithick][->,>=stealth](2.45,0)--(3,0);
				\draw[dashed](3.15,0)--(3.9,0);
				\draw[-](1.95,-1)to[out=90,in=270](0,-0.2);
				\draw[-](1.95,-1)to[out=90,in=270](3.9,-0.2);
				\node at(1.95,-1.2){$n$};
				\node at(5.65,0){$(B_n,2)$};
				\draw(7.65,0)circle(1ex)node[above]{$-$};
				\draw(8.35,0)circle(1ex)node[above]{$+$};
				\draw(9.05,0)circle(1ex)node[above]{$-$};
				\fill[black](9.75,0)circle(1ex)node[above right]{$-$};
				\fill[black](9.75,0.7)circle(1ex)node[above right]{$+$};
				\fill[black](9.75,-0.7)circle(1ex)node[above right]{$+$};
				\draw(10.45,0)circle(1ex)node[above]{$+$};
				\draw(11.15,0)circle(1ex)node[above]{$-$};
				\draw(11.85,0)circle(1ex)node[above]{$+$};
				\draw[dashed](6.9,0)--(7.5,0);
				\draw[semithick][->,>=stealth](7.8,0)--(8.2,0);
				\draw[semithick][->,>=stealth](8.9,0)--(8.5,0);
				\draw[semithick][->,>=stealth](9.75,0)--(9.2,0);
				\draw[semithick][->,>=stealth](9.15,0.1)--(9.65,0.6);
				\draw[semithick][->,>=stealth](9.15,-0.1)--(9.65,-0.6);
				\draw[semithick][->,>=stealth](9.75,0.7)--(9.75,0.15);
				\draw[semithick][->,>=stealth](9.75,-0.7)--(9.75,-0.15);
				\draw[semithick][->,>=stealth](9.75,0)--(10.3,0);
				\draw[semithick][->,>=stealth](11,0)--(10.6,0);
				\draw[semithick][->,>=stealth](11.3,0)--(11.7,0);
				\draw[dashed](12,0)--(12.6,0);
				\draw[->>](9.75,-1.4)to[out=180,in=270](7.65,-0.2);
				\draw[->>](9.75,-1.2)to[out=180,in=270](8.35,-0.2);
				\draw[->>](9.75,-1)to[out=180,in=270](9.05,-0.2);
				\draw[->>](9.75,-1.4)to[out=0,in=270](11.85,-0.2);
				\draw[->>](9.75,-1.2)to[out=0,in=270](11.15,-0.2);
				\draw[->>](9.75,-1)to[out=0,in=270](10.45,-0.2);
				\node at(9.75,-1.6){$\nu$};
				\draw[-](8,0.7)to[out=270,in=90](9.05,0.35);
				\draw[-](8,0.7)to[out=270,in=90](6.95,0.35);
				\node at(8,0.8){$n-1$};
				\draw[-](11.5,0.7)to[out=270,in=90](10.45,0.35);
				\draw[-](11.5,0.7)to[out=270,in=90](12.55,0.35);
				\node at(11.5,0.8){$n-1$};
				\node at(-1.5,-3){$(C_n,2)$};
				\fill[black](0.5,-2.3)circle(1ex)node[above right]{$-$};
				\fill[black](0.5,-3)circle(1ex)node[above right]{$+$};
				\fill[black](0.5,-3.7)circle(1ex)node[above right]{$-$};
				\fill[black](1.2,-2.3)circle(1ex)node[above right]{$+$};
				\fill[black](1.2,-3)circle(1ex)node[above right]{$-$};
				\fill[black](1.2,-3.7)circle(1ex)node[above right]{$+$};
				\fill[black](1.9,-2.3)circle(1ex)node[above right]{$-$};
				\fill[black](1.9,-3)circle(1ex)node[above right]{$+$};
				\fill[black](1.9,-3.7)circle(1ex)node[above right]{$-$};
				\fill[black](2.6,-2.3)circle(1ex)node[above right]{$+$};
				\fill[black](2.6,-3)circle(1ex)node[above right]{$-$};
				\fill[black](2.6,-3.7)circle(1ex)node[above right]{$+$};
				\draw(3.6,-2.65)circle(1ex)node[above right]{$+$};
				\draw(3.6,-3.35)circle(1ex)node[below right]{$-$};
				\draw[dashed](0.35,-2.3)--(-0.15,-2.3);
				\draw[dashed](0.35,-3)--(-0.15,-3);
				\draw[dashed](0.35,-3.7)--(-0.15,-3.7);
				\draw[semithick][->,>=stealth](0.5,-3)--(0.5,-2.45);
				\draw[semithick][->,>=stealth](0.5,-3)--(0.5,-3.55);
				\draw[semithick][->,>=stealth](0.5,-2.3)--(1.05,-2.3);
				\draw[semithick][->,>=stealth](1.2,-3)--(0.65,-3);
				\draw[semithick][->,>=stealth](0.3,-3.7)--(1.05,-3.7);
				\draw[semithick][->,>=stealth](1.2,-2.3)--(1.2,-2.85);
				\draw[semithick][->,>=stealth](1.2,-3.7)--(1.2,-3.15);
				\draw[semithick][->,>=stealth](1.9,-2.3)--(1.35,-2.3);
				\draw[semithick][->,>=stealth](1.2,-3)--(1.75,-3);
				\draw[semithick][->,>=stealth](1.9,-3.7)--(1.35,-3.7);
				\draw[semithick][->,>=stealth](1.9,-3)--(1.9,-2.45);
				\draw[semithick][->,>=stealth](1.9,-3)--(1.9,-3.55);
				\draw[semithick][->,>=stealth](1.9,-2.3)--(2.45,-2.3);
				\draw[semithick][->,>=stealth](2.6,-3)--(2.05,-3);
				\draw[semithick][->,>=stealth](1.9,-3.7)--(2.45,-3.7);
				\draw[semithick][->,>=stealth](2.6,-2.3)--(2.6,-2.85);
				\draw[semithick][->,>=stealth](2.6,-3.7)--(2.6,-3.15);
				\draw[semithick][->,>=stealth](2.6,-3)--(3.45,-2.65);
				\draw[semithick][->,>=stealth](2.6,-3)--(3.45,-3.35);
				\draw[semithick][->,>=stealth](3.5,-3.45)--(2.75,-3.7);
				\draw[-,line width=5pt,white](3.4,-3.15)--(2.8,-2.4);
				\draw[semithick][->,>=stealth](3.5,-3.25)--(2.75,-2.3);
				\draw[->>](4.1,-3)to[out=90,in=330](3.8,-2.65);
				\draw[->>](4.1,-3)to[out=270,in=30](3.8,-3.35);
				\node at(4.3,-3){$\nu$};
				\draw[-](1.2,-4.4)to[out=90,in=270](-0.2,-3.9);
				\draw[-](1.2,-4.4)to[out=90,in=270](2.6,-3.9);
				\node at(1.2,-4.5){$n-1$};
				\node at(5.65,-3){$(D_n,2)$};
				\fill[black](7.75,-3)circle(1ex)node[above]{$-$};
				\fill[black](8.45,-3)circle(1ex)node[above]{$+$};
				\fill[black](10.2,-3)circle(1ex)node[above]{$-$};
				\fill[black](10.9,-3)circle(1ex)node[above]{$+$};
				\fill[black](11.6,-2.3)circle(1ex)node[right]{$-$};
				\fill[black](11.6,-3.7)circle(1ex)node[right]{$-$};
				\draw[dashed](7.75,-3)--(7,-3);
				\draw[semithick][->,>=stealth](7.9,-3)--(8.3,-3);
				\draw[semithick][->,>=stealth](8.95,-3)--(8.6,-3);
				\draw[dashed](8.45,-3)--(10.2,-3);
				\draw[semithick][-](9.7,-3)--(10.05,-3);
				\draw[semithick][->,>=stealth](10.2,-3)--(10.75,-3);
				\draw[semithick][->,>=stealth](11.6,-2.3)--(11,-2.9);
				\draw[semithick][->,>=stealth](11.6,-3.7)--(11,-3.1);
				\draw[-](8.95,-4)to[out=90,in=270](7,-3.2);
				\draw[-](8.95,-4)to[out=90,in=270](10.9,-3.2);
				\node at(8.95,-4.2){$n-2$};
				\node at(-1.5,-6){$(E_6,2)$};
				\fill[black](0,-6)circle(1ex)node[below]{$\overset{1}{-}$};
				\fill[black](0.7,-6)circle(1ex)node[below]{$\overset{2}{+}$};
				\fill[black](1.4,-6)circle(1ex)node[below]{$\overset{3}{-}$};
				\fill[black](2.1,-6)circle(1ex)node[below]{$\overset{5}{+}$};
				\fill[black](2.8,-6)circle(1ex)node[below]{$\overset{6}{-}$};
				\fill[black](1.4,-5.3)circle(1ex)node[right]{$\overset{4}{+}$};
				\draw[semithick][->,>=stealth](0,-6)--(0.55,-6);
				\draw[semithick][->,>=stealth](1.4,-6)--(0.85,-6);
				\draw[semithick][->,>=stealth](1.4,-6)--(1.4,-5.45);
				\draw[semithick][->,>=stealth](1.4,-6)--(1.95,-6);
				\draw[semithick][->,>=stealth](2.8,-6)--(2.25,-6);
				\node at(5.65,-6){$(E_7,2)$};
				\fill[black](7.15,-6)circle(1ex)node[below]{$\overset{1}{-}$};
				\fill[black](7.85,-6)circle(1ex)node[below]{$\overset{2}{+}$};
				\fill[black](8.55,-6)circle(1ex)node[below]{$\overset{3}{-}$};
				\fill[black](8.55,-5.3)circle(1ex)node[right]{$\overset{7}{+}$};
				\fill[black](9.25,-6)circle(1ex)node[below]{$\overset{4}{+}$};
				\fill[black](9.95,-6)circle(1ex)node[below]{$\overset{5}{-}$};
				\fill[black](10.65,-6)circle(1ex)node[below]{$\overset{6}{+}$};
				\draw[semithick][->,>=stealth](7.15,-6)--(7.7,-6);
				\draw[semithick][->,>=stealth](8.55,-6)--(8,-6);
				\draw[semithick][->,>=stealth](8.55,-6)--(8.55,-5.45);
				\draw[semithick][->,>=stealth](8.55,-6)--(9.15,-6);
				\draw[semithick][->,>=stealth](9.95,-6)--(9.4,-6);
				\draw[semithick][->,>=stealth](9.95,-6)--(10.5,-6);
				\node at(-1.5,-9){$(E_8,2)$};
				\fill[black](0,-9)circle(1ex)node[below]{$\overset{1}{-}$};
				\fill[black](0.7,-9)circle(1ex)node[below]{$\overset{2}{+}$};
				\fill[black](1.4,-9)circle(1ex)node[below]{$\overset{3}{-}$};
				\fill[black](2.1,-9)circle(1ex)node[below]{$\overset{4}{+}$};
				\fill[black](2.8,-9)circle(1ex)node[below]{$\overset{5}{-}$};
				\fill[black](2.8,-8.3)circle(1ex)node[right]{$\overset{8}{+}$};
				\fill[black](3.5,-9)circle(1ex)node[below]{$\overset{6}{+}$};
				\fill[black](4.2,-9)circle(1ex)node[below]{$\overset{7}{-}$};
				\draw[semithick][->,>=stealth](0,-9)--(0.55,-9);
				\draw[semithick][->,>=stealth](1.4,-9)--(0.85,-9);
				\draw[semithick][->,>=stealth](1.4,-9)--(1.95,-9);
				\draw[semithick][->,>=stealth](2.8,-9)--(2.25,-9);
				\draw[semithick][->,>=stealth](2.8,-9)--(2.8,-8.45);
				\draw[semithick][->,>=stealth](2.8,-9)--(3.35,-9);
				\draw[semithick][->,>=stealth](4.2,-9)--(3.65,-9);
				\node at(5.65,-9){$(F_4,2)$};
				\draw(7.15,-9)circle(1ex)node[above]{$+$};
				\draw(7.85,-9)circle(1ex)node[above]{$-$};
				\fill[black](8.55,-9)circle(1ex)node[below right]{$-$};
				\fill[black](8.55,-8.3)circle(1ex)node[below right]{$+$};
				\fill[black](8.55,-9.7)circle(1ex)node[below right]{$+$};
				\fill[black](9.25,-8.65)circle(1ex)node[right]{$+$};
				\fill[black](9.25,-7.95)circle(1ex)node[right]{$-$};
				\fill[black](9.25,-9.35)circle(1ex)node[right]{$-$};
				\draw(9.95,-9)circle(1ex)node[above]{$+$};
				\draw(10.65,-9)circle(1ex)node[above]{$-$};
				\draw[semithick][->,>=stealth](7.7,-9)--(7.3,-9);
				\draw[semithick][->,>=stealth](7.95,-8.9)--(8.45,-8.4);
				\draw[semithick][->,>=stealth](7.95,-9.1)--(8.45,-9.6);
				\draw[semithick][->,>=stealth](8.55,-9)--(8,-9);
				\draw[semithick][->,>=stealth](8.55,-8.3)--(8.55,-8.85);
				\draw[semithick][->,>=stealth](8.55,-9.7)--(8.55,-9.15);
				\draw[semithick][->,>=stealth](8.55,-9)--(9.1,-8.65);
				\draw[semithick][->,>=stealth](9.25,-8.65)--(9.25,-8.1);
				\draw[semithick][->,>=stealth](9.25,-8.65)--(9.25,-9.2);
				\draw[semithick][->,>=stealth](9.25,-7.95)--(8.7,-8.3);
				\draw[semithick][->,>=stealth](9.25,-9.35)--(8.7,-9.7);
				\draw[-,line width=4pt,white](8.8,-9)--(9.7,-9);
				\draw[semithick][->,>=stealth](8.55,-9)--(9.8,-9);
				\draw[semithick][->,>=stealth](10.1,-9)--(10.5,-9);
				\draw[->>](8.9,-10.2)to[out=180,in=270](7.85,-9.15);
				\draw[->>](8.9,-10.2)to[out=0,in=270](9.95,-9.15);
				\draw[->>](8.9,-10.4)to[out=180,in=270](7.15,-9.15);
				\draw[->>](8.9,-10.4)to[out=0,in=270](10.65,-9.15);
				\node at(8.9,-10.6){$\nu$};
				\node at(-1.5,-12){$(G_2,2)$};
				\fill[black](0.2,-12)circle(1ex)node[below]{$-$};
				\fill[black](0.9,-12)circle(1ex)node[above left]{$+$};
				\fill[black](1.6,-12)circle(1ex)node[above left]{$-$};
				\fill[black](2.3,-12)circle(1ex)node[above right]{$+$};
				\fill[black](3,-12) circle(1ex)node[below]{$-$};
				\draw(1.6,-10.4)circle(1ex)node[above]{$\overset{\rm III}{}$};;
				\draw(0.216,-12.8)circle(1ex)node[below left]{$\overset{\rm V}{}$};
				\draw(2.984,-12.8)circle(1ex)node[below right]{$\overset{\rm I}{}$};
				\draw[semithick][->,>=stealth](0.9,-12)--(0.35,-12);
				\draw[semithick][->,>=stealth](0.9,-12)--(1.45,-12);
				\draw[semithick][->,>=stealth](2.3,-12)--(1.75,-12);
				\draw[semithick][->,>=stealth](2.3,-12)--(2.85,-12);
				\draw[semithick][->,>=stealth](1.6,-12)--(1.6,-10.55);
				\draw[semithick][->,>=stealth](1.55,-10.55)--(0.9,-11.85);
				\draw[semithick][->,>=stealth](1.65,-10.55)--(2.3,-11.85);
				\draw[semithick][->,>=stealth](0.2,-12)--(1.5,-10.5);
				\draw[semithick][->,>=stealth](3,-12)--(1.7,-10.5);
				\draw[semithick][->,>=stealth](1.6,-12)--(0.316,-12.7);
				\draw[semithick][->,>=stealth](0.266,-12.65)--(0.9,-12.15);
				\draw[semithick][->,>=stealth](0.366,-12.8)--(2.3,-12.15);
				\draw[-,line width=4pt,white](2.784,-12.6)--(1.8,-12.2);
				\draw[semithick][->,>=stealth](1.7,-12.1)--(2.884,-12.7);
				\draw[->>](2.884,-12.9)to[out=240,in=300](0.316,-12.9);
				\draw[->>](0.116,-12.7)to[out=120,in=180](1.45,-10.4);
				\draw[->>](1.75,-10.4)to[out=0,in=60](3.084,-12.7);
				\node at(1.6,-13.8){$\nu$};
		\end{tikzpicture}\vspace{-2mm}

 \caption{Dynkin quivers $Q(X_n,2)$.}
		\label{Dq}
	\end{figure}

	\subsection{Q-systems and Y-systems}
Then we review the notions of restricted Q-systems and restricted constant Y-systems.
Notations follow those used in \cite{KNS}.
Let $\Delta$ be a root system of type $X_n$ with a normalize inner product so that $\pair{\alpha,\alpha}=2$ for long roots $\alpha$.
Let $\Delta_+$, $\Delta_{{\rm long}}$ and $\Delta_{{\rm short}}$ be the set of positive roots, long roots and short loots, respectively.
For the simple root $\alpha_i$, we define a integer $t_i$ by
		\[t_i=\frac{2}{\pair{\alpha_i,\alpha_i}}\]
for $i=1,\dots,n$.
Set
	\[H=\{(i,m)\mid 1\leq i\leq n,\, 1\leq m\}\supset H_\ell=\{(i,m)\mid1\leq i\leq n,\, 1\leq m\leq t_i\ell-1\}.\]
Let $C=(C_{i,j})_{1\leq i,j\leq n}$ be the Cartan matrix of type $X_n$.
Then we define the number associated with the pair $(i,m),(j,k)\in H$ by
	\[G_{im,jk}=
		\begin{cases}
		-C_{j,i}(\delta_{m,2k-1}+2\delta_{m,2k}+\delta_{m,2k+1}),& t_i/t_j=2,\\
		-C_{j,i}(\delta_{m,3k-2}+2\delta_{m,3k-1}+3\delta_{m,3k}+2\delta_{m,3k+1}+\delta_{m,3k+2},& t_i/t_j=3,\\
		-C_{i,j}\delta_{t_jm,t_ik},& \text{otherwise}.
		\end{cases}\]
The unrestricted Q-system of type $X_n$ is the set of algebraic relations on the variables $\bigl\{Q_m^{(i)}\mid(i,m)\in H\bigr\}$ which has the form
	\[\bigl(Q_m^{(i)}\bigr)^2=Q_{m-1}^{(i)}Q_{m+1}^{(i)}+\bigl(Q_m^{(i)}\bigr)^2\prod_{(j,k)\in H}\bigl(Q_k^{(j)}\bigr)^{G_{im,jk}},\]
where \smash{$Q_m^{(0)}=Q_0^{(i)}=1$}.
By restricting the index set $H$ to $H_\ell$, we obtain the level $\ell$ restricted Q-system.
Namely, it is the algebraic relation among the variables \smash{$\big\{Q_m^{(i)}\mid(i,m)\in H_\ell\big\}$} such~that
	\[\bigl(Q_m^{(i)}\bigr)^2=Q_{m-1}^{(i)}Q_{m+1}^{(i)}+\bigl(Q_m^{(i)}\bigr)^2\prod_{(j,k)\in H_\ell}\bigl(Q_k^{(j)}\bigr)^{G_{im,jk}},\]
where \smash{$Q_m^{(0)}=Q_0^{(i)}=Q_{t_i\ell}^{(i)}=1$}.
Then the level $\ell$ restricted constant Y-system of type $X_n$ is the set of algebraic relations on \smash{$\bigl\{Y_m^{(i)}\mid(i,m)\in H_\ell\bigr\}$} which has the form
	\begin{align}\label{Y-sys}
	\bigl(Y_m^{(i)}\bigr)^2=\frac{\prod_{(j,k)\in H_\ell}\bigl(1+Y_k^{(j)}\bigr)^{G_{im,jk}+2\delta_{i,j}\delta_{m,k}}}{\bigl(1+\bigl(Y_{m-1}^{(i)}\bigr)^{-1}\bigr)\bigl(1+\bigl(Y_{m+1}^{(i)}\bigr)^{-1}\bigr)},
	\end{align}
where \smash{$\bigl(Y_0^{(i)}\bigr)^{-1}=\bigl(Y_{t_i\ell}^{(i)}\bigr)^{-1}=0$}.
Furthermore, the level $\ell$ restricted Q-system and the level $\ell$ restricted constant Y-system have the following relationship.
	\begin{Proposition}[{\cite[Proposition 14.1]{KNS}}]\label{QY}
	Suppose \smash{$Q_m^{(i)}$} satisfies the level $\ell$ restricted Q-system of type $X_n$.
	Then
		\[Y_m^{(i)}=\frac{\bigl(Q_m^{(i)}\bigr)^2\prod_{(j,k)\in H_\ell}\bigl(Q_k^{(j)}\bigr)^{G_{im,jk}}}{Q_{m-1}^{(i)}Q_{m+1}^{(i)}}\]
	is a solution of the level $\ell$ restricted constant Y-system of type $X_n$.
	\end{Proposition}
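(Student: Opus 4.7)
The plan is to verify the Y-system relation \eqref{Y-sys} by direct substitution of the proposed formula for $Y_m^{(i)}$, using the Q-system repeatedly. First, I rewrite the level $\ell$ restricted Q-system at $(i,m)$ as
\[\bigl(Q_m^{(i)}\bigr)^2-Q_{m-1}^{(i)}Q_{m+1}^{(i)} = \bigl(Q_m^{(i)}\bigr)^2\prod_{(j,k)\in H_\ell}\bigl(Q_k^{(j)}\bigr)^{G_{im,jk}},\]
which, together with the definition of $Y_m^{(i)}$, gives the two key identities
\[1+Y_m^{(i)} = \frac{\bigl(Q_m^{(i)}\bigr)^2}{Q_{m-1}^{(i)}Q_{m+1}^{(i)}},\qquad 1+\bigl(Y_m^{(i)}\bigr)^{-1} = \prod_{(j,k)\in H_\ell}\bigl(Q_k^{(j)}\bigr)^{-G_{im,jk}},\]
the second being obtained from the first by dividing through by $Y_m^{(i)}$.

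I then substitute these into every factor $(1+Y_k^{(j)})$ and $\bigl(1+(Y_{m\pm 1}^{(i)})^{-1}\bigr)$ on the right-hand side of \eqref{Y-sys}, turning both sides into explicit Laurent monomials in the variables $Q_k^{(j)}$. The diagonal $2\delta_{i,j}\delta_{m,k}$ contribution in the numerator exponent produces a factor $\bigl((Q_m^{(i)})^2/(Q_{m-1}^{(i)}Q_{m+1}^{(i)})\bigr)^2$, which matches the analogous factor on the left-hand side of $(Y_m^{(i)})^2$ and cancels. Using the boundary conditions $Q_0^{(j)}=Q_{t_j\ell}^{(j)}=1$ to re-index the shifted products in $\prod_{(j,k)}(Q_{k\pm 1}^{(j)})^{\ast}$, the Y-system relation reduces, factor by factor in $Q_k^{(j)}$, to the combinatorial identity
\[G_{im,j(k-1)}+G_{im,j(k+1)} = G_{i(m-1),jk}+G_{i(m+1),jk}\]
for each $(j,k)\in H_\ell$ (with the convention $G_{im,j0}=G_{im,j(t_j\ell)}=0$).

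In the simply-laced case $t_i=t_j$, this identity is immediate from $G_{im,jk}=-C_{i,j}\delta_{m,k}$, since both sides collapse to $-C_{i,j}(\delta_{m,k-1}+\delta_{m,k+1})$. The main obstacle is the non-simply-laced cases, where the weighted delta patterns ($1,2,1$ for $t_i/t_j=2$ and $1,2,3,2,1$ for $t_i/t_j=3$) do not yield a pointwise identity and instead require one to invoke further Q-system relations at neighboring indices $(j,k\pm 1)$ to absorb the discrepancy. The argument then proceeds by case analysis on $t_i/t_j\in\{2,3,1/2,1/3\}$, relying on the precise Cartan datum of~$X_n$; this combinatorial step is the technical heart of the proof, carried out in detail in \cite[Proposition~14.1]{KNS}.
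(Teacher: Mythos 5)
The paper itself offers no proof of this Proposition (it is imported verbatim from \cite[Proposition~14.1]{KNS}), so your argument has to stand on its own, and as written it does not. The first half is fine: the identities $1+Y_m^{(i)}=\bigl(Q_m^{(i)}\bigr)^2\bigl(Q_{m-1}^{(i)}Q_{m+1}^{(i)}\bigr)^{-1}$ and $1+\bigl(Y_m^{(i)}\bigr)^{-1}=\prod_{(j,k)}\bigl(Q_k^{(j)}\bigr)^{-G_{im,jk}}$ are correct consequences of the restricted Q-system, the boundary checks $Q_0^{(j)}=Q_{t_j\ell}^{(j)}=1$ and $G_{i0,jk}=G_{i(t_i\ell),jk}=0$ are as you say, and exponent matching settles the simply-laced case. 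But the Proposition is asserted for all $X_n$, and exactly at the long/short couplings your proof stops: you note that the pointwise identity fails, claim that ``further Q-system relations at neighboring indices'' absorb the discrepancy, and then defer that case analysis to \cite[Proposition~14.1]{KNS} --- i.e., to the statement being proved. That is the heart of the claim, and it is missing.

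Moreover, the repair you sketch cannot work in the form you state it. If the exponent in (\ref{Y-sys}) is kept literally as $G_{im,jk}+2\delta_{i,j}\delta_{m,k}$, the mismatch at a long/short pair is a genuine factor of type $1+Y_k^{(j)}$, not something further Q-system relations can cancel: for $B_n$ at level $2$, the $Y$ produced by the Proposition's formula from (\ref{SQB}) gives $Y_1^{(n-1)}=n^2-1$, $Y_1^{(n)}=Y_3^{(n)}=n/(n+1)$, $Y_2^{(n)}=n^2/(2n+1)$, and at $(i,m)=(n,2)$ the relation requires $1+Y_1^{(n-1)}$ to enter with exponent $1$, whereas $G_{n2,(n-1)1}=2$; with exponent $2$ the relation is numerically false. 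The point is that in the Y-system the exponent must be read with the arguments of $G$ transposed, $G_{jk,im}+2\delta_{i,j}\delta_{m,k}$ (the KNS convention, which coincides with $G_{im,jk}$ in the simply-laced case, so your computation there is unaffected). With that pairing your direct-substitution strategy does close uniformly: comparing the exponent of each $Q_k^{(j)}$ reduces (\ref{Y-sys}) to
\[
2G_{im,jk}-2G_{jk,im}+G_{j(k-1),im}+G_{j(k+1),im}=G_{i(m-1),jk}+G_{i(m+1),jk},
\]
which is your identity when $t_i=t_j$, and for $t_i/t_j\in\{2,3,1/2,1/3\}$ is an immediate cancellation of the shifted $(1,2,1)$, resp.\ $(1,2,3,2,1)$, delta patterns against $\delta_{t_jm,t_ik}$; no additional Q-system input is needed, and the boundary terms vanish automatically. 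So to complete the proof you should correct the pairing of indices in the exponent and then actually carry out this short delta computation in the mixed cases, rather than deferring them to the cited reference.
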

For restricted constant Y-systems, we know the following fact
	\begin{Theorem}[{\cite{IIKKN, IIKKN2, KN}}]\label{Ysol}
	There exists a unique solution of the level $\ell$ restricted constant Y-system for type $X_n$ satisfying \smash{$Y_m^{(i)}\in\R_{\geq0}$} for all $(i,m)\in H_\ell$.
	\end{Theorem}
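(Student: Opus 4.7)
The plan is to split the theorem into existence and uniqueness. Existence I would obtain through the Q-system bridge of Proposition \ref{QY}, and uniqueness through a thermodynamic-Bethe-ansatz (TBA) style convexity argument.

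For existence, by Proposition \ref{QY} it suffices to exhibit a positive real solution $\bigl\{Q_m^{(i)}\bigr\}_{(i,m)\in H_\ell}$ of the level-$\ell$ restricted Q-system, since then the rational expression for $Y_m^{(i)}$ in that proposition is automatically a positive real. A canonical positive $Q$-solution is the principal specialization
\[
Q_m^{(i)} \;=\; \chi\bigl(W_m^{(i)}\bigr)(q),
\]
where $\chi(W_m^{(i)})$ is the character of the Kirillov--Reshetikhin module at node $i$ of size $m$ and $q$ is a specific root of unity of order $t(\ell+h^\vee)$ lying in the fundamental alcove. The Kirillov--Reshetikhin-type Q-relations become known character identities, positivity follows from the Weyl character formula on the positive alcove, and the boundary condition $Q_{t_i\ell}^{(i)}=1$ reflects the affine-Weyl symmetry of characters at this specialization.

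For uniqueness, I would perform the TBA change of variables $y_m^{(i)}=Y_m^{(i)}/\bigl(1+Y_m^{(i)}\bigr)\in(0,1)$, which turns \eqref{Y-sys} into
\[
\log\frac{y_m^{(i)}}{1-y_m^{(i)}} \;=\; \sum_{(j,k)\in H_\ell} K_{im,jk}\,\log\bigl(1-y_k^{(j)}\bigr),
\]
for a symmetric matrix $K$ read off from $G+2\,\mathrm{id}$. These are the critical-point equations of the Rogers-dilogarithm functional
\[
\Phi(y) \;=\; \sum_{(i,m)\in H_\ell} L\bigl(y_m^{(i)}\bigr) \;-\; \frac{1}{2}\sum_{(i,m),(j,k)\in H_\ell} K_{im,jk}\,\log\bigl(1-y_m^{(i)}\bigr)\log\bigl(1-y_k^{(j)}\bigr)
\]
on the open box $(0,1)^{|H_\ell|}$, where $L$ denotes the Rogers dilogarithm. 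I would then show that $\Phi$ is strictly concave there, so it has at most one critical point, from which uniqueness of the positive Y-solution follows.

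The main obstacle is the strict concavity of $\Phi$, equivalently positive definiteness of the kernel $K$ on the index set $H_\ell$. In the simply-laced case this reduces to the known positive definiteness of the inverse Cartan matrix of $X_n$ tensored with a truncated tridiagonal matrix in the $m$-index. In the non-simply-laced cases $B_n$, $C_n$, $F_4$, $G_2$ and the twisted situations, the inhomogeneous terms with $t_i/t_j\in\{2,3\}$ in the definition of $G_{im,jk}$ must be handled carefully; the cleanest route is to realize $K$ as coming from the folding of a simply-laced Dynkin diagram and inherit positive definiteness from the simply-laced case. Existence is largely representation-theoretic input, while the analytic core of the statement is this convexity argument.
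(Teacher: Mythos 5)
A point of context first: the paper does not prove Theorem \ref{Ysol} at all; it is imported from \cite{IIKKN,IIKKN2,KN}, so there is no internal proof to match. What the paper does supply is the explicit positive solution, via exactly the bridge you describe: Proposition \ref{QY} together with Theorem \ref{Qsol} (Lee's proof of the KNS conjecture) gives $Q_m^{(i)}=\dim_q{\rm res}\bigl(W_m^{(i)}\bigr)>0$ on $H_\ell$, hence positive $Y_m^{(i)}$. Your existence half is therefore consistent with the paper's machinery, but it understates where the difficulty lies: positivity and the boundary behaviour are \emph{not} immediate from the Weyl character formula, because the weights occurring in the decompositions (\ref{QB})--(\ref{QD}) need not lie in the level-$\ell$ alcove; the real content is precisely properties 1)--3) of Theorem \ref{Qsol} (symmetry, strict monotonicity, truncation of the $q$-dimensions), which you must cite rather than dismiss as ``known character identities.''

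The uniqueness half has a genuine gap. Your functional $\Phi$ only has (\ref{Y-sys}) as its critical-point equations if the kernel is symmetric, and $G_{im,jk}$ is not symmetric for the non-simply-laced types: when $t_i/t_j=2$ one has $G_{im,jk}=-C_{j,i}\bigl(\delta_{m,2k-1}+2\delta_{m,2k}+\delta_{m,2k+1}\bigr)$, while the transposed entry falls into the ``otherwise'' case and equals $-C_{j,i}\delta_{m,2k}$; these differ at $m=2k\pm1$ and by a factor $2$ at $m=2k$. Moreover, passing to $y_m^{(i)}=Y_m^{(i)}/\bigl(1+Y_m^{(i)}\bigr)$ does not give the log-form you wrote: the factors $\bigl(1+\bigl(Y_{m\pm1}^{(i)}\bigr)^{-1}\bigr)$ produce extra $\log Y_{m\pm1}^{(i)}$ terms, so the left-hand side is a discrete Laplacian in $m$ acting on $\log\bigl(y/(1-y)\bigr)$, and the genuine TBA kernel arises only after inverting that tridiagonal operator, with the inversion ranges $1\leq m\leq t_i\ell-1$ differing between long and short nodes. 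Thus for $B_n$, $C_n$, $F_4$, $G_2$ --- exactly the cases this paper needs --- the variational formulation is not set up as stated, and the central lemma (symmetrizability plus strict concavity, i.e., positive definiteness of the correct kernel) is exactly what you leave open, with only a hope that folding transfers it from the simply-laced case. As it stands the analytic core of the theorem is asserted rather than proved; you would either have to carry out the symmetrization and positive-definiteness argument in full, or fall back on the proofs in the cited sources.
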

	
In order to give the explicit solution of the Q-system of classical type $X_n$ characterized in the Theorem \ref{Ysol}, we introduce the notion of $q$-dimension.
Let $\geh$ be the simple Lie algebra of type~$X_n$.
Let $P=\Z\varpi_1\op\cdots\op\Z\varpi_n$ be the weight lattice of type $\geh$ and $P_+=\Z_{\geq0}\varpi_1\op\cdots\op\Z_{\geq0}\varpi_n$ be its subset.
Let $\chi(V_\varpi)$ be the character of the irreducible finite dimensional representation $V_\varpi$ of $\geh$ with highest weight $\varpi\in P_+$.
The specialization $\dim_q(V_\varpi)$ of $\chi(V_\varpi)$ is given by \[
\dim\nolimits_qV_\varpi=\prod_{\alpha\in\Delta_+}\frac{\sin{\frac{\pi\pair{\alpha,\varpi+\rho}}{\ell+h^\vee}}}{\sin{\frac{\pi\pair{\alpha,\rho}}{\ell+h^\vee}}},
\]
where $\rho$ is the half sum of positive roots.
This is the $q$-dimension at the root of unity \smash{$q={\rm e}^\frac{\pi {\rm i}}{t(\ell+h^\vee)}$}.
It is known that classical character of the Kirillov--Reshetikhin module \smash{$Q_m^{(i)}={\rm res}\chi_q\bigl(W_m^{(i)}\bigr)$} satisfies the unrestricted Q-system.
Furthermore, \smash{${\rm res}\chi_q\bigl(W_m^{(i)}\bigr)$} is expressed as a linear combination of certain characters $\chi(V_\varpi)$ for $\varpi\in P_+$.
For simplicity, denote $\chi(V_\varpi)$ by $\chi(\varpi)$.
Specifically, we~have
	\begin{align}\label{QA}
	Q_m^{(i)}=\chi(m\varpi_i)
	\end{align}
for $A_n$,
	\begin{align}
	&Q_m^{(i)}=\sum\chi(k_{i^\prime}\varpi_{i^\prime}+\cdots+k_{i-2}\varpi_{i-2}+k_i\varpi_i),\nonumber\\
 & \hphantom{Q_m^{(i)}=}  1\leq i\leq n,\ i^\prime\in\{0,1\},\ i^\prime\equiv i\pmod{2}\label{QB}
	\end{align}
for $B_n$, where $\varpi_0$ stands for 0, and the sum goes over nonnegative integers $k_{i^\prime},\dots,k_{i-2},k_i$ such that $t_i(k_{i^\prime}+\cdots+k_{i-2})+k_i=m$.
	\begin{align}\label{QC}
	Q_m^{(i)}=\begin{cases}
		\sum\chi(k_1\varpi_1+\cdots+k_i\varpi_i),&1\leq i\leq n-1,\\
		\chi(m\varpi_n),&i=n
	\end{cases}
	\end{align}
for $C_n$, where the sum goes over nonnegative integers $k_1,\dots,k_i$ such that $k_1+\cdots+k_i\leq m$ and $k_j\equiv m\delta_{i,j}\pmod{2}$ for all $1\leq j\leq i$,
	\begin{align}\label{QD}
\begin{aligned}
&Q_m^{(i)}=\sum\chi(k_{i^\prime}\varpi_{i^\prime}+\cdots+k_{i-2}\varpi_{i-2}+k_i\varpi_i),\\
 &\hphantom{Q_m^{(i)}=} 1\leq i\leq n-2,\ i^\prime\in\{0,1\},\ i^\prime\equiv i\pmod{2}, \\
	&Q_m^{(i)}=\chi(m\varpi_i),\quad i=n-1,n
\end{aligned}
	\end{align}
for $D_n$, where $\varpi_0$ stands for 0, and sum goes over nonnegative integers $k_{i^\prime},\dots,k_{i-2},k_i$ such~that $k_{i^\prime}+\cdots+k_{i-2}+k_i=m$.
Denote the specialization of \smash{${\rm res}\chi_q\bigl(W_m^{(i)}\bigr)$} to the $q$-dimension by \smash{$\dim_q{\rm res}\bigl(W_m^{(i)}\bigr)$}.
By the definition, \smash{$Q_m^{(i)}=\dim_q{\rm res}\bigl(W_m^{(i)}\bigr)$} still satisfies the unrestricted Q-system.
Then, for classical types $X_n$, we have the following truncation.
	\begin{Theorem}[{\cite{L,L2}}]\label{Qsol}
	\smash{$Q_m^{(i)}=\dim_q{\rm res}\bigl(W_m^{(i)}\bigr)$} satisfies the level $\ell$ restricted $Q$-system.
	Moreover strongly, the following properties hold for any $1\leq i\leq n$:
			\begin{enumerate}\itemsep=0pt
			\item[$1)$] $Q_m^{(i)}=Q_{t_i\ell-m}^{(i)}$ for $0\leq m\leq t_i\ell$,
			\item[$2)$] $Q_m^{(i)}<Q_{m+1}^{(i)}$ for $0\leq m\leq[t_i\ell/2]$,
			\item[$3)$] $Q_{t_i\ell+j}=0$ for $1\leq j\leq t_ih^\vee-1$,
			\end{enumerate}
	where $[t_i\ell/2]$ is the largest integer not exceeding $t_i\ell/2$.
	\end{Theorem}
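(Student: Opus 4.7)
The plan is to reduce each of the three assertions to properties of the $q$-dimension formula
\[
\dim\nolimits_q V_\varpi = \prod_{\alpha\in\Delta_+}\frac{\sin\bigl(\pi\langle\alpha,\varpi+\rho\rangle/(\ell+h^\vee)\bigr)}{\sin\bigl(\pi\langle\alpha,\rho\rangle/(\ell+h^\vee)\bigr)}
\]
combined with the explicit character decompositions (\ref{QA})--(\ref{QD}). Two elementary features of the above formula drive everything: it vanishes whenever $\varpi+\rho$ lies on an affine hyperplane $\langle\alpha,\cdot\rangle\in(\ell+h^\vee)\Z$ for some $\alpha\in\Delta_+$, and it is alternating, $\dim_q V_{w\cdot\varpi}=\mathrm{sgn}(w)\dim_q V_\varpi$, under the dot action of the affine Weyl group at level $\ell+h^\vee$. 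Both follow from Weyl's character formula applied at the specified root of unity.

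For assertion $(3)$, I would substitute $m=t_i\ell+j$ with $1\leq j\leq t_ih^\vee-1$ into the appropriate sum among (\ref{QA})--(\ref{QD}) and check that each summand $\dim_q V_\varpi$ vanishes, either because $\varpi+\rho$ itself meets an affine wall or because the summands pair into orbits of a single affine reflection with opposite signs. For $A_n$ with $Q_m^{(i)}=\chi(m\varpi_i)$, the pairing $\langle\theta,(t_i\ell+j)\varpi_i+\rho\rangle$ together with the range of $j$ directly forces a wall-hit for a suitable positive root. For $B_n$, $C_n$, $D_n$, the index constraints ($t_i(k_{i'}+\cdots+k_{i-2})+k_i=m$, etc.) localize the sum to a slab of weights on which a single affine reflection either fixes each weight on a wall or pairs weights with sign cancellation.

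For assertion $(1)$, palindromic symmetry, I would exhibit a bijection on the index set of (\ref{QA})--(\ref{QD}) that carries the sum at $m$ to the sum at $t_i\ell-m$ and simultaneously realizes the matching weights as images under a fixed affine Weyl element (the dot-action of reflection through the wall $\langle\theta,\cdot\rangle=\ell+h^\vee$ in the simply-laced case, and its twisted analogue otherwise). For $A_n$ this is the dot-action of $s_0$ on $m\varpi_i+\rho$; for the other classical types it is the map sending the last coordinate $k_i\mapsto t_i\ell-m-\sum_{s<i}t_ik_s$ (or its analogue in $C_n$, $D_n$), and one verifies type-by-type that the combinatorial bijection lifts to the affine reflection on weights.

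Assertion $(2)$ is the main obstacle. The strategy is to express $Q_{m+1}^{(i)}-Q_m^{(i)}$ directly from (\ref{QA})--(\ref{QD}) and identify it with a sum of $\dim_q V_\varpi$ for $\varpi+\rho$ lying in the interior of the fundamental level-$(\ell+h^\vee)$ alcove, where every factor of the sine product is strictly positive. For $A_n$ this reduces to a Pieri-type difference between $\chi((m+1)\varpi_i)$ and $\chi(m\varpi_i)$; for $B_n$ and $D_n$ the unconstrained leading index $k_i$ makes the telescoping transparent. The genuinely difficult case is $C_n$, where the parity constraint $k_j\equiv m\delta_{i,j}\pmod 2$ in (\ref{QC}) forces one first to compare $Q_{m+2}^{(i)}$ with $Q_m^{(i)}$ and then to refine the two-step inequality to the claimed one-step inequality via termwise positivity. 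Controlling the sign of individual sine factors near the boundary $m\approx[t_i\ell/2]$, where several factors are close to zero, is where I expect the bulk of the technical work to concentrate.
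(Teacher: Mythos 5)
Note first that the paper itself gives no proof of this statement: Theorem \ref{Qsol} is imported from Lee's work \cite{L,L2}, so your attempt must be judged as a free-standing proof of that theorem, and judged that way it is an outline with a genuine gap rather than a proof. The wall-vanishing and alternation of $\dim_q V_\varpi$ under the affine dot action are the right raw material, and your plan for assertions $(1)$ and $(3)$ (pairing or cancelling the summands of (\ref{QA})--(\ref{QD}) under a suitable affine reflection) is plausible; but for the non-simply-laced types the relevant symmetry involves the normalization $\pair{\alpha,\alpha}=2$ for long roots, the integers $t_i$, and the specialization $q={\rm e}^{\pi {\rm i}/t(\ell+h^\vee)}$, and you never pin down which affine Weyl group and translation lattice you are reflecting by. That is exactly where the sign and vanishing bookkeeping can break, and you do not carry it out for a single type, so even $(1)$ and $(3)$ are only sketched.

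The decisive gap is assertion $(2)$. You propose to realize $Q_{m+1}^{(i)}-Q_m^{(i)}$ as a sum of $\dim_q V_\varpi$ with $\varpi+\rho$ interior to the fundamental alcove, but you give no reason such a positive decomposition exists: the index sets in (\ref{QB})--(\ref{QD}) at $m$ and $m+1$ are disjoint (they are cut out by $t_i(k_{i^\prime}+\cdots+k_{i-2})+k_i=m$ and its analogues), so there is no termwise comparison to telescope, and for $C_n$ the parity constraint makes the two sums run over entirely different weights; your fallback of comparing $Q_{m+2}^{(i)}$ with $Q_m^{(i)}$ and then ``refining'' to one step is precisely the missing argument, not a reduction of it---as you concede. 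The cited proofs do not proceed by termwise positivity of sine products: they work structurally in the level-$\ell$ WZW fusion (Verlinde) ring, where the specialized Kirillov--Reshetikhin characters become elements with nonnegative fusion expansions, monotonicity and positivity follow from nonnegativity of fusion coefficients, and the symmetry $(1)$ and truncation $(3)$ come from a simple-current symmetry and the vanishing of fusion characters outside the truncated window. Without either completing your alcove-positivity analysis in detail (including the delicate boundary cases near $m\approx[t_i\ell/2]$ that you flag) or passing to such a structural argument, assertion $(2)$, and hence the theorem, remains unproven.
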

Theorem \ref{Qsol} implies \smash{$Q_m^{(i)}>0$} for all $(i,m)\in H_\ell$.
In fact, by calculating characters $\chi(V_{\varpi})$, we have the following results for level 2 restricted Q-systems.
For $A_n$, by using (\ref{QA}), we have
	\begin{align*}
	Q_1^{(i)}=\prod_{j=1}^i\prod_{k=1}^{n+1-i}\frac{s(j+k)}{s(j+k-1)}
	\end{align*}
for $s(x)=\sin{\frac{\pi x}{n+3}}$.
For $B_n$, by using (\ref{QB}), we have
	\begin{align}\label{SQB}
	Q_1^{(i)}=i+1,\quad 1\leq i\leq n-1,\qquad Q_1^{(n)}=Q_3^{(n)}=\sqrt{2n+1},\qquad Q_2^{(n)}=n+1.
	\end{align}
For $C_n$, by using (\ref{QC}), we have
	\begin{align}\label{SQC}
	&Q_1^{(i)}=Q_3^{(i)}=\frac{s\bigl(\frac{i+1}{2}\bigr)s\bigl(\frac{i+3}{2}\bigr)s(i+2)}{s\bigl(\frac{1}{2}\bigr)s\bigl(\frac{3}{2}\bigr)s(2)},\qquad 1\leq i\leq n ,\\
	\label{SQC2}
	&Q_2^{(i)}=2\sum_{j=0}^i\frac{s(j)s(j+1)s(j+2)}{s(1)s(2)s(3)}+\frac{s(i+1)s(i+2)s(i+3)}{s(1)s(2)s(3)},\qquad 1\leq i\leq n-1,
	\end{align}
for $s(x)=\sin{\frac{\pi x}{n+3}}$.
For $D_n$, by using (\ref{QD}), we have
	\begin{align}\label{SQD}
	Q_1^{(i)}=i+1,\quad 1\leq i\leq n-2 ,\qquad Q_1^{(n-1)}=Q_1^{(n)}=\sqrt{n}.
	\end{align}
Therefore, \smash{$Y_m^{(i)}$} constructed by Proposition \ref{QY} with \smash{$Q_m^{(i)}=\dim_q{\rm res}\bigl(W_m^{(i)}\bigr)$} is real positive for all~$(i,m)\in H_\ell$.

	\section[Exponents of Q(X\_n,ell)]{Exponents of $\boldsymbol{Q(X_n,\ell)}$}
	
Set $Y=(Y_1,\dots,Y_N)$ as $(y_1,\dots,y_N)$ and denote by $y$.
We review the definition of exponents of~$Q(X_n,\ell)$ and verify their relationship with Y-systems.
	
	\subsection{Exponents}
For the rational function $\mu_\gamma(y)$, we have the following property.
		\begin{Proposition}[{\cite{M}}]\label{eta}
		The fixed point equation $\mu_\gamma(y)=y$ has a unique positive real solution.
		\end{Proposition}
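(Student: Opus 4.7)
The plan is to show that the fixed point equation $\mu_\gamma(y)=y$ is equivalent, under a natural identification of the $y$-variables with the Y-system variables $Y_m^{(i)}$, to the level $\ell$ restricted constant Y-system (\ref{Y-sys}), and then invoke Theorem \ref{Ysol} together with the positivity established via the explicit $q$-dimension formulas.

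The first step is to identify each vertex of $Q(X_n,\ell)$ with a pair $(i,m,\epsilon)\in H_\ell\times\{+,-\}$ (augmented, for non-simply-laced types, by the white-vertex labels) and to write out $\mu_\gamma=\nu\circ\mu_-\circ\mu_+$ explicitly on variables. Since all vertices inside each of ${\bf I}_+^\bullet\cup{\bf I}_+^\circ\cup{\bf I}_{\rm I}^\circ$ and ${\bf I}_-^\bullet\cup{\bf I}_{\rm II}^\circ$ are mutually non-adjacent in $Q(X_n,\ell)$ (as is visible in Figure~\ref{Dq}), the mutations composing $\mu_+$ commute, and each one inverts $y_v$ at a $+$-vertex $v$ while producing factors $(1+y_v)^{\pm Q_{\cdot,\cdot}}$ at its neighbors; the analogous statement holds for $\mu_-$. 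Composing with the folding $\nu$ and imposing $\mu_\gamma(y)=y$ then yields one algebraic relation at each vertex.

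The main step is to verify that, after identifying $y$-variables at $\nu$-conjugate vertices (as forced by the fixed-point condition together with the folding) and clearing denominators, these relations reduce exactly to (\ref{Y-sys}). This is a type-by-type but essentially routine computation, tracking how the arrow multiplicities in $Q(X_n,\ell)$ encode the matrix entries $G_{im,jk}$. Once the equivalence is established, Theorem~\ref{Ysol} supplies a unique nonnegative real solution $\bigl\{Y_m^{(i)}\bigr\}$, and strict positivity is already guaranteed by the explicit formulas (\ref{SQB})--(\ref{SQD}) for $Q_m^{(i)}=\dim_q{\rm res}\bigl(W_m^{(i)}\bigr)$ combined with Proposition~\ref{QY}, which presents every $Y_m^{(i)}$ as a ratio of strictly positive quantities. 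Hence the unique nonnegative solution is automatically positive.

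The main obstacle is the bookkeeping needed in the second paragraph: the interaction between black and white vertices and the $\nu$-folding in the non-simply-laced cases must be handled carefully to confirm that the Y-seed mutation coefficients coming from the arrow multiplicities of $Q(X_n,\ell)$ match the matrix $G_{im,jk}$ in (\ref{Y-sys}). For the classical types of interest in this paper this amounts to a direct check against Figure~\ref{Dq}, and for the general case it is essentially the content of \cite{IIKKN,IIKKN2} relating the mutation loop $\gamma(X_n,\ell)$ to the restricted Y-system.
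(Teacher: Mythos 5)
The paper itself gives no proof of this proposition: it is imported verbatim from \cite{M}, and the logic of the later sections runs in the opposite direction to your plan --- after exhibiting, type by type, a fixed point $\eta$ built out of the Y-system solution, the paper invokes Proposition \ref{eta} to conclude that this exhibited $\eta$ is \emph{the} positive fixed point. Your proposal instead tries to deduce the proposition from Theorem \ref{Ysol}, so the entire weight of the argument falls on the claimed equivalence between the fixed point equation on $\R_{>0}^N$ and the restricted constant Y-system, and that equivalence is exactly the nontrivial point; as sketched it is not correct. For the folded types the number of $y$-variables strictly exceeds $|H_\ell|$ (for $(B_{2l},2)$ one has $N=4l+1$ while $|H_2|=2l+2$), and the fixed-point condition does not simply ``identify $y$-variables at $\nu$-conjugate vertices'': in the paper's explicit solution the relations are $\eta_1^{(j)}=\bigl(\eta_1^{(4l-j)}\bigr)^{-1}$ and, at the junction, $\eta_1^{(2l-1)}=\bigl(Y_1^{(2l-1)}\bigr)^{-1}\bigl(Y_2^{(2l)}+1\bigr)$, so the dictionary between fixed points and Y-solutions involves inversions and extra factors of the form $1+Y$, not an identification. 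More seriously, even granting such a dictionary, what would transfer uniqueness from Theorem \ref{Ysol} is the statement that \emph{every} positive solution of $\mu_\gamma(y)=y$ is forced to satisfy these relations and hence determines a nonnegative Y-system solution, with the fixed point recoverable from it. Your computation only makes plausible the converse direction (a Y-solution produces a fixed point), which yields existence but not uniqueness; the claim that the fixed-point equations themselves force the $\nu$-symmetry relations and reduce exactly to (\ref{Y-sys}) is asserted as routine bookkeeping, but it is precisely the step that is missing, and it is not carried out (nor needed) anywhere in the paper, which instead relies on \cite{M} for uniqueness.

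Two smaller points. Your positivity argument leans on (\ref{SQB})--(\ref{SQD}), which are level-2, classical-type formulas, whereas the proposition as quoted from \cite{M} is stated for general $(X_n,\ell)$; to prove it in that generality along your lines you would need positivity of the Y-solution in general, or you must explicitly restrict the scope of what you prove. Also, Theorem \ref{Ysol} gives uniqueness only among \emph{nonnegative} solutions, so your argument should record why a positive fixed point cannot map to a Y-solution with vanishing entries (easy, since the dictionary is subtraction-free in positive quantities, but it belongs in the proof).
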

Denote such a unique positive real solution by $\eta\in\R^N$.
Combining Theorem \ref{Ysol}, Proposition~\ref{eta} and the explicit form of $\mu_\gamma(y)$ calculated in the next section, we find that the solution~$\eta$ is given in terms of restricted constant Y-systems.
That is, $\eta$ is given to satisfy the relation~(\ref{Y-sys}).
Using Proposition \ref{eta}, we are able to define the unique matrix by
		\[J_\gamma(\eta)=\biggl(\frac{\partial\mu_\gamma(y)_i}{\partial y_j}\biggr)\biggr|_{y=\eta}\]
from the Jacobian of the rational function $\mu_\gamma(y)$, where $\mu_\gamma(y)_i$ is the $i$-th component of $\mu_\gamma$.
By the periodicity, we have $J_\gamma(\eta)^{t(\ell+h^\vee)}=I$.
Therefore, all eigenvalues of $J_\gamma(\eta)$ are $t(\ell+h^\vee)$-th root of unities.
Namely, these eigenvalues are written by
\[
{\rm e}^\frac{2\pi {\rm i}m_1}{t(\ell+h^\vee)},\dots,{\rm e}^\frac{2\pi {\rm i}m_N}{t(\ell+h^\vee)}
\]
by using a sequence of integers $0\leq m_1\leq\cdots\leq m_N<t(\ell+h^\vee)$.
We call this sequence $\mathcal{E}=(m_1,\dots,m_N)$ as the exponents of $Q(X_n,\ell)$.
		
	\subsection{Conjectural formulas}

In order to describe a conjectural formula associated with the type $X_n$, we define two polynomials
		\[N_{X_n,\ell}(z)=\prod_{i=1}^n\frac{z^{t(\ell+h^\vee)}-1}{z^{t/t_i}-1},\qquad D_{X_n,\ell}(z)=D_{X_n,\ell}^{{\rm long}}(z)D_{X_n,\ell}^{{\rm short}}(z),\]
where the polynomials \smash{$D_{X_n,\ell}^{{\rm long}}(z)$} and \smash{$D_{X_n,\ell}^{{\rm short}}(z)$} are defined by
		\[D_{X_n,\ell}^{{\rm long}}(z)=\prod_{\alpha\in\Delta_{{\rm long}}}\Bigl(z^t-{\rm e}^\frac{2\pi {\rm i}\pair{\rho,\alpha}}{\ell+h^\vee}\Bigr),\qquad D_{X_n,\ell}^{{\rm short}}(z)=\prod_{\alpha\in\Delta_{{\rm short}}}\Bigl(z-{\rm e}^\frac{2\pi {\rm i}\pair{\rho,\alpha}}{\ell+h^\vee}\Bigr).\]
The following conjecture is stated by Mizuno.
	\begin{Conjecture}[{\cite[Conjecture 3.8]{M}}]\label{MConj}
	Let $X_n$ be a finite type Dynkin diagram and $\ell$ be a~positive integer such that $\ell\geq2$.
	Let $\gamma=\gamma(X_n,\ell)$ be the mutation loop on the quiver $Q(X_n,\ell)$.
	Then the following identity holds:
		\[\det (zI-J_\gamma(\eta))=\frac{N_{X_n,\ell}(z)}{D_{X_n,\ell}(z)}.\]
	\end{Conjecture}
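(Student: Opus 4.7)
The plan is to establish the identity uniformly for \emph{all} finite Dynkin types $X_n$ and \emph{all} levels $\ell\geq 2$, by computing $J_\gamma(\eta)$ explicitly and matching its spectrum with that of $N_{X_n,\ell}/D_{X_n,\ell}$ via representation theory at the root of unity $q=\mathrm{e}^{\pi \mathrm{i}/(t(\ell+h^\vee))}$.

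First I would make the Jacobian explicit. Using $\mu_\gamma=\nu\circ\mu_-\circ\mu_+$ and the fact that each of $\mu_\pm$ is a product of pairwise commuting single-vertex mutations, one obtains
\[
J_\gamma(\eta)=P_\nu\cdot J_-(\mu_+(\eta))\cdot J_+(\eta),
\]
where $P_\nu$ is the permutation matrix of $\nu$ and each $J_\pm$ has off-diagonal entries that are rational in $\eta^{\pm 1}$ and $(1+\eta^{\pm 1})^{-1}$, read off from $Q(X_n,\ell)$ and~\eqref{Y-sys}. Since $\eta$ is the unique positive Y-system solution (Theorem~\ref{Ysol}, Proposition~\ref{eta}), every entry of $J_\gamma(\eta)$ is, via Proposition~\ref{QY}, an algebraic function of the $q$-dimensions $Q_m^{(i)}=\dim_q\mathrm{res}\bigl(W_m^{(i)}\bigr)$.

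Next I would change coordinates from $\delta\log Y$ to $\delta\log Q$. Under this substitution $J_\gamma(\eta)$ is conjugate to a linear operator $L_{X_n,\ell}$ whose matrix elements depend only on the Cartan matrix $C$ of $X_n$, the discrete shift $m\mapsto m\pm 1$ on $H_\ell$, and the folding $\nu$. I would then diagonalize $L_{X_n,\ell}$ by combining the Kirillov--Reshetikhin character expansions~\eqref{QA}--\eqref{QD} (and their exceptional-type analogues) with the Weyl character formula at $q=\mathrm{e}^{\pi \mathrm{i}/(t(\ell+h^\vee))}$. The target statement is that the eigenvalues of $L_{X_n,\ell}$ are $\mathrm{e}^{2\pi \mathrm{i}\pair{\rho,\alpha}/(\ell+h^\vee)}\cdot\zeta$ for $\alpha\in\Delta_{{\rm short}}$ and $\zeta=1$, and for $\alpha\in\Delta_{{\rm long}}$ and $\zeta$ a $t$-th root of unity, together with additional $t(\ell+h^\vee)$-th roots of unity accounting for the $\nu$-orbit structure. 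Factor-by-factor comparison with $N_{X_n,\ell}/D_{X_n,\ell}$, in which the $\nu$-orbit lengths $t(\ell+h^\vee)/(t/t_i)$ pair with the cyclotomic factors of the numerator and the long/short-root products pair with $D_{X_n,\ell}^{{\rm long}}, D_{X_n,\ell}^{{\rm short}}$, then closes the argument.

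The main obstacle is the diagonalization of $L_{X_n,\ell}$ in the character basis. It amounts to a compatibility between Y-seed mutations and the $q$-character homomorphism that is currently known only in limited situations: the cases $(A_1,\ell)$ and $(A_n,2)$ treated in~\cite{M}, and $(B_n,2), (D_n,2)$ together with a reformulation of $(C_n,2)$ in the present paper. For $\ell\geq 3$ the multi-layered structure of $Q(X_n,\ell)$ makes $L_{X_n,\ell}$ no longer block-tridiagonal and forces higher fusion relations among Kirillov--Reshetikhin modules into the computation. For the exceptional types $E_{6,7,8}, F_4, G_2$, the branching at the trivalent node visible in Figure~\ref{Dq} and the non-uniform action of $\nu$ obstruct a simple inductive argument; a genuinely uniform proof likely requires interpreting $J_\gamma(\eta)$ as the action of a distinguished element in the fusion category associated with $\widehat{\geh}$ at level $\ell$, which is beyond the scope of techniques presently at our disposal.
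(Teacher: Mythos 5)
The statement you are trying to prove is stated in the paper only as a \emph{conjecture} (Mizuno's Conjecture 3.8); the paper does not prove it in general, and neither does your proposal. The decisive step in your plan --- the diagonalization of the operator $L_{X_n,\ell}$ in the character basis, i.e., the assertion that its eigenvalues are ${\rm e}^{2\pi {\rm i}\pair{\rho,\alpha}/(\ell+h^\vee)}\zeta$ indexed by $\alpha\in\Delta_{{\rm long}}\cup\Delta_{{\rm short}}$ and appropriate roots of unity --- is precisely the content of the conjecture restated in different coordinates, and you offer no argument for it beyond naming the target spectrum. Your own final paragraph concedes that this step is available only in the cases already settled ($(A_1,\ell)$ and $(A_n,2)$ in \cite{M}, and $(B_n,2)$, $(D_n,2)$ here, with $(C_n,2)$ reduced to Conjecture~\ref{Csol}) and that a uniform argument is beyond current techniques. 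Reducing an open conjecture to an equivalent unproven spectral claim leaves a genuine gap: the entire analytic core of the proof is missing. (The preliminary reductions you do describe --- the factorization $J_\gamma(\eta)=P_\nu J_-(\mu_+(\eta))J_+(\eta)$ and the identification of $\eta$ with the positive Y-system solution via Theorem~\ref{Ysol} and Proposition~\ref{eta} --- are consistent with what the paper uses, but they are the easy part.)

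For comparison, in the cases the paper does settle it proceeds by a much more concrete route that never passes through the character basis: it writes out the cluster transformation entry by entry, evaluates the Jacobian at the explicit fixed point $\eta$ computed from the $q$-dimensions (\ref{SQB}) and (\ref{SQD}), derives three-term recurrences (\ref{Br1})--(\ref{Br8}) and (\ref{Dr1})--(\ref{Dr4}) for the components of a putative eigenvector, solves them in closed form (Lemmas~\ref{B} and~\ref{D}), and reads the admissible eigenvalues off the boundary condition $\phi_0=0$; the resulting characteristic polynomials $(z+1)\bigl(z^{2n+1}-1\bigr)/(z-1)$ and $(z+1)(z^n-1)/(z-1)$ are then matched against $N_{X_n,2}(z)/D_{X_n,2}(z)$ computed directly from the explicit root systems. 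If you wish to make genuine progress, you must either actually carry out your proposed diagonalization (open for $\ell\geq3$ and for the exceptional types) or follow this explicit case-by-case route; as written, your text is a research program, not a proof.
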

	\begin{Remark}
	This conjecture has been proven by Mizuno for $\gamma(A_1,\ell)$ and $\gamma(A_n,2)$.
	\end{Remark}
By this identity, we obtain a relationship with the theory of cluster algebras and the one of affine Lie algebras.
See \cite{M}.

	\section{Main results}
	From now, we assume that level $\ell$ is equal to 2.
	We treat the case that the rank of $X_n$ is even, since the argument for the odd rank is parallel to the one for the even rank.

	\subsection[Type B\_n]{Type $\boldsymbol{B_n}$}\label{section4.1}
Set $n$ as $2l$.
For the type $B_{2l}\ (l\geq1)$, set
\[
y=\bigl(y_1^{(1)},\dots,y_1^{(2l-1)},y_1^{(2l)},y_2^{(2l)},y_3^{(2l)},y_1^{(2l+1)},\dots, y_1^{(4l-1)}\bigr),
\]
 where the variable \smash{{$y_j^{(i)}$}} corresponds to the $i$-th
vertex from the left and the $j$-th vertex from the~top of $Q(B_{2l},2)$.
Now we have the cluster transformation as follows:
		\begin{align*}
		\left(\begin{array}{@{}l@{}}
			y_1^{(2i-1)}\\[1.3pt]
			y_1^{(2i)}\\[1.3pt]
			y_1^{(2l-1)}\\[1.3pt]
			y_{1,3}^{(2l)}\\[1.3pt]
			y_2^{(2l)}\\[1.3pt]
			y_1^{(2l+1)}\\[1.3pt]
			y_1^{(2l+2i)}\\[1.3pt]
			y_1^{(2l+2i+1)}
		\end{array} \right)&\overset{\mu_+}{\mapsto}
		\left( \begin{array}{@{}l@{}}
			y_1^{(2i-1)}\bigl(y_1^{(2i-2)}+1\bigr)\bigl(y_1^{(2i)}+1\bigr)\\[1.3pt]
			\bigl(y_1^{(2i)}\bigr)^{-1}\\[1.3pt]
			y_1^{(2l-1)}\bigl(y_1^{(2l-2)}+1\bigr)\bigl(y_1^{(2l)}+1\bigr)\bigl(y_3^{(2l)}+1\bigr)\\[1.3pt]
			\bigl(y_{1,3}^{(2l)}\bigr)^{-1}\\[1.3pt]
			y_2^{(2l)}\bigl(\bigl(y_1^{(2l)}\bigr)^{-1}+1\bigr)^{-1}\bigl(\bigl(y_3^{(2l)}\bigr)^{-1}+1\bigr)^{-1}\bigl(y_1^{(2l+1)}+1\bigr)\\[1.3pt]
			\bigl(y_1^{(2l+1)}\bigr)^{-1}\\[1.3pt]
			y_1^{(2l+2i)}\bigl(y_1^{(2l+2i-1)}+1\bigr)\bigl(y_1^{(2l+2i+1)}+1\bigr)\\[1.3pt]
			\bigl(y_1^{(2l+2i+1)}\bigr)^{-1}
		\end{array} \right)\\[1.3pt]
		&\overset{\mu_-}{\mapsto}
		\left( \begin{array}{@{}l@{}}
			\bigl(\mu_+(y)_1^{(2i-1)}\bigr)^{-1}\\[1.3pt]
			\mu_+(y)_1^{(2i)}\\[1.3pt]
			\mu_+(y)_1^{(2l-1)}\bigl(\mu_+(y)_2^{(2l)}+1\bigr)\\[1.3pt]
			\mu_+(y)_{1,3}^{(2l)}\bigl(\bigl(\mu_+(y)_2^{(2l)}\bigr)^{-1}+1\bigr)^{-1}\\[1.3pt]
			\bigl(\mu_+(y)_2^{(2l)}\bigr)^{-1}\\[1.3pt]
			\mu_+(y)_1^{(2l+1)}\bigl(\mu_+(y)_2^{(2l)}+1\bigr)\\[1.3pt]
			\bigl(\mu_+(y)_1^{(2l+2i)}\bigr)^{-1}\\[1.3pt]
			\mu_+(y)_1^{(2l+2i+1)}
		\end{array} \right)\overset{\nu}{\mapsto}\left( \begin{array}{@{}l@{}}
			\mu_-\mu_+(y)_1^{(4l-2i+1)}\\[1.3pt]
			\mu_-\mu_+(y)_1^{(4l-2i)}\\[1.3pt]
			\mu_-\mu_+(y)_1^{(2l+1)}\\[1.3pt]
			\mu_-\mu_+(y)_{1,3}^{(2l)}\\[1.3pt]
			\mu_-\mu_+(y)_2^{(2l)}\\[1.3pt]
			\mu_-\mu_+(y)_1^{(2l-1)}\\[1.3pt]
			\mu_-\mu_+(y)_1^{(2(l-i))}\\[1.3pt]
			\mu_-\mu_+(y)_1^{(2(l-i)-1)}
		\end{array} \right)
		\end{align*}
for $i=1,\dots,l-1$.
Here \smash{$y_1^{(0)}$} stands for 0.
For a solution of the level 2 restricted constant Y-system of type $B_{2l}$, if we take \smash{$\eta=\bigl(\eta_1^{(1)},\dots,\eta_1^{(2l-1)},\eta_1^{(2l)}, \eta_2^{(2l)},\eta_3^{(2l)},\eta_1^{(2l+1)},\dots,\eta_1^{(4l-1)}\bigr)$} to be
\begin{alignat*}{4}
&\eta_1^{(2i-1)}=\bigl(Y_1^{(2i-1)}\bigr)^{-1},\qquad&&\eta_1^{(2i)}=Y_1^{(2i)},\qquad&& 1\leq i\leq l-1, &\\
&\eta_1^{(2l-1)}=\bigl(Y_1^{(2l-1)}\bigr)^{-1}\bigl(Y_2^{(2l)}+1\bigr),\qquad&&\eta_{1,3}^{(2l)}=Y_{1,3}^{(2l)},\qquad&&\eta_2^{(2l)}=\bigl(Y_2^{(2l)}\bigr)^{-1},&\\
&\eta_1^{(2l+1)}=Y_1^{(2l-1)},\qquad&&\eta_1^{(j)}=\bigl(\eta_1^{(4l-j)}\bigr)^{-1},\qquad&& 2l+2\leq j\leq 4l-1,&
\end{alignat*}
then we find that $\eta$ is the solution of the fixed point equation $\mu_\gamma(y)=y$.
Furthermore, using Proposition~\ref{QY} and~(\ref{SQB}), we are able to calculate the value of \smash{$\bigl\{Y_m^{(i)}\mid(i,m)\in H_2\bigr\}$} satisfying~the~condition of Theorem \ref{Ysol} as follows:
\[Y_1^{(i)}=i(i+2),\quad 1\leq i\leq 2l-1 ,\qquad Y_1^{(2l)}=Y_3^{(2l)}=\frac{2l}{2l+1},\qquad Y_2^{(2l)}=\frac{4l^2}{4l+1}.\]

For a nonzero complex number $\lambda$, we consider the equations involving $(\phi_i)_{i=1}^{4l+1}$ such that
	\begin{align}\label{Beq}
	\sum_{j=1}^{4l+1}J_\gamma(\eta)_{i,j}\phi_j=\lambda\phi_i
	\end{align}
with the boundary conditions given by $\phi_0=\phi_{4l+2}=0$.
Upon calculating the Jacobian, we derive the relations
	\begin{align}
	\label{Br1}&\lambda\phi_{2k-1}=-\frac{\phi_{4l-2k+3}}{(4k^2-1)^2},\\
	\label{Br2}&\lambda\phi_{2k}=\frac{k\phi_{4l-2k+1}}{k+1}+16k^2(k+1)^2\phi_{4l-2k+2}+\frac{(k+1)\phi_{4l-2k+3}}{k},\\
	\label{Br3}&\lambda\phi_{2l+2k+2}=-\frac{\phi_{2l-2k}}{16(l-k)^2(l-k+1)^2},\\
&\lambda\phi_{2l+2k+3}=\frac{(2l-2k+1)\phi_{2l-2k-2}}{2l-2k-1}+(2l-2k-1)^2(2l-2k+1)^2\phi_{2l-2k-1}\nonumber\\
	&\hphantom{\lambda\phi_{2l+2k+3}=}{}+\frac{(2l-2k-1)\phi_{2l-2k}}{2l-2k+1}\label{Br4}
	\end{align}
for $k=1,\dots,l-1$ and
	\begin{align}
	\label{Br5}&\lambda\phi_{2l-1}=\frac{2l(2l+1)(\phi_{2l}+\phi_{2l+2})}{(2l-1)(4l+1)^2}+\frac{16l^4\phi_{2l+1}}{(4l^2-1)(4l+1)^2}-\frac{2\phi_{2l+3}}{(2l-1)^2(2l+1)(4l+1)},\\
	\label{Br6}&\lambda\phi_{2l,2l+2}=-\frac{2l\phi_{2l,2l+2}}{2l+1}+\frac{8l^3\phi_{2l+1}}{(2l+1)^3}+\frac{\phi_{2l+2,2l}}{2l+1}+\frac{(4l+1)\phi_{2+3}}{2l(2l+1)^3},\\
	\label{Br7}&(\lambda+1)\phi_{2l+1}=-\frac{(2l+1)^2(\phi_{2l}+\phi_{2l+2})}{8l^3}-\frac{(4l+1)\phi_{2l+3}}{16l^4},\\
&\lambda\phi_{2l+3}=\frac{(2l+1)\phi_{2l-2}}{2l-1}+(2l-1)^2(4l+1)\phi_{2l-1}+\bigl(4l^2-1\bigr)(\phi_{2l}+\phi_{2l+2})\nonumber\\
	&\hphantom{\lambda\phi_{2l+3}=}{}+\frac{16l^4(2l-1)\phi_{2l+1}}{(2l+1)(4l+1)}+\frac{(2l-1)\phi_{2l+3}}{2l+1}.\label{Br8}
	\end{align}
Then we have the following lemma.
		\begin{Lemma}\label{B}
		Set $\phi_{2l}=\phi_{2l+2}=1$.
		Then we have
			\begin{align*}
	&\phi_{2l-2k-1}=\frac{\Phi_{2l-2k-1}}{\lambda}\\
&\hphantom{\phi_{2l-2k-1}=}{}\times\!\bigl((2l-2k+1)\bigl(\lambda^{2k+1}+\lambda^{2k}+\lambda^{-2k}+\lambda^{-(2k+1)}\bigr) +2\bigl(\lambda^{2k-1}+\cdots+\lambda^{-(2k-1)}\bigr)\bigr),\\ &\phi_{2l-2k}=2\Phi_{2l-2k}\bigl((l-k+1)\bigl(\lambda^{2k}+\lambda^{2k-1}+\lambda^{-(2k-1)}+\lambda^{-2k}\bigr)+\lambda^{2(k-1)}+\cdots+\lambda^{-2(k-1)}\bigr),\\
			&\phi_{2l-1}=-\frac{2l(2l+1)^2}{(2l-1)^2(4l+1)^2}\bigl(2+(2l+1)\lambda^{-1}+(2l+1)\lambda^{-2}\bigr),\\
			&\phi_{2l+1}=-\frac{(2l+1)^3}{8l^3}\bigl(1+\lambda^{-1}\bigr),\\
			&\phi_{2l+3}=\frac{2l(2l+1)^2}{4l+1}\bigl((2l+1)\bigl(\lambda+\lambda^{-1}\bigr)+4n\bigr),\\
			&\phi_{2l+2k+2}=-\lambda(2l-2k-1)^2(2l-2k+1)^2\phi_{2l-2k},\\
			&\phi_{2l+2k+3}=-\frac{\phi_{2l-2k-1}}{16\lambda(l-k)^2(l-k+1)^2}
			\end{align*}
		for $k=1,\dots,l-1$, where $\Psi_m$ are rational numbers given by
			\begin{align*}
			&\Phi_{2l-2k-1}=-\frac{2(l-k)(2l+1)^2}{(2l-2k-1)^2(2l-2k+1)^2(4l+1)},\\
			&\Phi_{2l-2k}=\frac{(2l-2k+1)(2l+1)^2}{4l+1}.			
			\end{align*}
		Furthermore, such numbers satisfy the boundary condition if and only if $\lambda=\zeta^a$ for primitive $(4l+1)$-th root of unity $\zeta$ and $a=1,\dots,4l$.
		\end{Lemma}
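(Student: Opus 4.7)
The plan is to construct the eigenvector $(\phi_i)_{i=1}^{4l+1}$ outward from the centre pair $\phi_{2l}=\phi_{2l+2}=1$, and then to identify the cyclotomic condition on $\lambda$ that enforces the boundary.

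First I would handle the \emph{central block}: with $\phi_{2l}$ and $\phi_{2l+2}$ fixed to $1$, equation (\ref{Br7}) yields $\phi_{2l+1}$ directly, and (\ref{Br5})--(\ref{Br6}) reduce to a small linear system for $\phi_{2l-1}$ and $\phi_{2l+3}$ whose solution reproduces the explicit expressions stated in the lemma. Equation (\ref{Br8}) then pins down $\phi_{2l-2}$ (and, paired with it, $\phi_{2l-3}$), which serves as the $k=1$ base step for the outward recursion.

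The bulk of the work is an induction on $k=1,\dots,l-1$. The closed form for $\phi_{2l-2k-1}$ and $\phi_{2l-2k}$ has the structure $\Phi\cdot(\text{four outer Laurent monomials}+\text{inner symmetric sum}\ \sum_j\lambda^j)$. The claim is that the three-term recurrences (\ref{Br2}) and (\ref{Br4}) acting on this ansatz shift the inner sum outward by one power on each side while rescaling the outer coefficient from $(2l-2k+1)$ down to $(2l-2k-1)$, provided the $\Phi$-prefactors satisfy the ratio prescribed in the lemma. The identities $\phi_{2l+2k+2}=-\lambda(2l-2k-1)^2(2l-2k+1)^2\phi_{2l-2k}$ and $\phi_{2l+2k+3}=-\phi_{2l-2k-1}/[16\lambda(l-k)^2(l-k+1)^2]$ are then read off directly from (\ref{Br3}) and its reindexed counterpart in (\ref{Br1}) combined with the ansatz.

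Finally, for the boundary condition I would extend the formula for $\phi_{2l-2k}$ formally to $k=l$; the bracket telescopes to
\[(\lambda^{2l}+\lambda^{2l-1}+\lambda^{-(2l-1)}+\lambda^{-2l})+\sum_{j=-(2l-2)}^{2l-2}\lambda^j=\sum_{j=-2l}^{2l}\lambda^j=\lambda^{-2l}\frac{\lambda^{4l+1}-1}{\lambda-1},\]
so $\phi_0=0$ if and only if $\lambda^{4l+1}=1$ and $\lambda\neq 1$, i.e.\ $\lambda=\zeta^a$ with $a\in\{1,\dots,4l\}$. The condition $\phi_{4l+2}=0$ produces the same equation by the left--right symmetry of the formulas. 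The main obstacle is the algebraic bookkeeping in the inductive step: the $k$-dependent rational coefficients in (\ref{Br2}) and (\ref{Br4}) must conspire with the ratios of consecutive $\Phi$-prefactors so that the Laurent-polynomial form of the ansatz is preserved and the outer coefficient shifts correctly. Once this telescoping identity is isolated, the remainder of the argument is routine substitution.
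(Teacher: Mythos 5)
Your proposal is correct and follows essentially the same route as the paper, whose proof is precisely the "direct calculation" you outline: solve the small central linear system coming from (\ref{Br5})--(\ref{Br8}) with $\phi_{2l}=\phi_{2l+2}=1$, march outward by induction using the mirror relations (\ref{Br1}), (\ref{Br3}) together with the three-term recurrences (\ref{Br2}), (\ref{Br4}), and then impose the boundary condition. Your telescoping of the $k=l$ extension of $\phi_{2l-2k}$ into $\lambda^{-2l}\bigl(\lambda^{4l+1}-1\bigr)/(\lambda-1)$ reproduces exactly the paper's expression for $\phi_0$ and hence the same characterization $\lambda=\zeta^a$, $a=1,\dots,4l$.
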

		\begin{proof}
		The proof is given by direct calculation using relations (\ref{Br1})--(\ref{Br8}) under the assumption $\phi_{2l}=\phi_{2l+2}=1$.
		Then the boundary condition is provided by
			\[\phi_0=\frac{2(2+1)^2}{4l+1}\lambda^{-2l}\frac{\lambda^{4l+1}-1}{\lambda-1}=0.\]
		Therefore, we conclude that these numbers satisfy the boundary condition if and only if $\lambda=\zeta^a$ for $a=1,\dots,4l$.
		\end{proof}
		\begin{Remark}\label{spB}
		Now, the numbers $\phi=(0,\dots,0,\phi_{2l},0,\phi_{2l+2},0,\dots,0)$ determined from the tuple $(\lambda,\phi_{2l},\phi_{2l+2})=(-1,1,-1)$ also satisfies (\ref{Beq}) and the boundary condition.
		\end{Remark}
By Lemma \ref{B} and Remark \ref{spB}, it is understood that
	\[J_\gamma(\eta)\psi=\lambda\psi\]
when the vector $\psi=\bigl(\psi_1^{(1)},\dots,\psi_1^{(2l-1)},\psi_1^{(2l)},\psi_2^{(2l)},\psi_3^{(2l)},\psi_1^{(2l+1)},\dots,\psi_1^{(4l-1)}\bigr)$ is defined as
	\begin{gather*}
 \psi_1^{(i)}=\phi_i,\qquad 1\leq i\leq 2l-1,\\
 \psi_m^{(2l)}=\phi_{2l+m-1},\qquad m=1,2,3,\\
 \psi_1^{(j)}=\phi_{j+2},\qquad 2l+1\leq j\leq 4l-1.
 \end{gather*}

The quiver $Q(B_{2l-1},2)$, $l\geq2$, is obtained by removing one white vertex at each end of~$Q(B_{2l},2)$.
Note that we label the vertices from $1$ to $4l-3$ from left horizontally.
In view of this, the formulas of $\mu_+$ in the beginning of Section~\ref{section4.1}
for $B_{2l-1}$ are obtained by replacing~\smash{$y_m^{(2i+1)}$} \big(resp.\ \smash{$y_m^{(2i)}$}\big) with \smash{$y_m^{(2i)}$} \big(resp.\ \smash{$y_m^{(2i-1)}$}\big).
Hence, the calculations are parallel to the case of $B_{2l}$. Namely, we have
	\begin{align}\label{JB}
	\det (zI-J_\gamma(\eta))=(z+1)\frac{z^{2n+1}-1}{z-1}
	\end{align}
for the mutation loop $\gamma=\gamma(B_n,2)$.
		\begin{Theorem}
		For the mutation loop $\gamma=\gamma(B_n,2)$, we have
			\[\det (zI-J_\gamma(\eta))=\frac{N_{B_n,2}(z)}{D_{B_n,2}(z)}.\]
		\end{Theorem}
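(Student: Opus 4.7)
The plan is to observe that equation~(\ref{JB}), obtained from Lemma~\ref{B}, Remark~\ref{spB}, and the parallel analysis for $B_{2l-1}$, already gives the explicit formula $\det(zI-J_\gamma(\eta))=(z+1)(z^{2n+1}-1)/(z-1)$. The theorem therefore reduces to verifying the purely algebraic identity
\[\frac{N_{B_n,2}(z)}{D_{B_n,2}(z)}=(z+1)\frac{z^{2n+1}-1}{z-1},\]
which depends only on root-system data for $B_n$. For $B_n$ we have $t=2$, $h^\vee=2n-1$, so $\ell+h^\vee=2n+1$ and $t(\ell+h^\vee)=4n+2$; the integers $t_i$ satisfy $t_i=1$ for $1\le i\le n-1$ and $t_n=2$, hence $t/t_i\in\{1,2\}$. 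Substituting yields
\[N_{B_n,2}(z)=\biggl(\frac{z^{4n+2}-1}{z^2-1}\biggr)^{n-1}\frac{z^{4n+2}-1}{z-1}.\]

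For the short part of the denominator, I would work in standard coordinates with $\rho=\sum_{i=1}^n(n-i+\tfrac12)e_i$, so that the short roots $\pm e_i$ contribute $\pair{\rho,\pm e_i}=\pm(2(n-i)+1)/2$. Setting $\zeta={\rm e}^{\pi {\rm i}/(2n+1)}$, the exponents $\pm(2(n-i)+1)$ modulo $4n+2$ exhaust all odd residues in $\{1,3,\dots,4n+1\}$ except $2n+1$. Because $(\zeta^k)^{2n+1}=-1$ for odd $k$, the product $\prod_{k\text{ odd}}(z-\zeta^k)$ equals $z^{2n+1}+1$, and dividing out the missing factor $(z-\zeta^{2n+1})=z+1$ gives
\[D^{{\rm short}}_{B_n,2}(z)=\frac{z^{2n+1}+1}{z+1}.\]

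The main step is the long part. The long roots are $\pm e_i\pm e_j$ with $i<j$; for fixed $\{i,j\}$ the four sign choices produce residues $\pm(i-j)$ and $\pm(i+j)\pmod{2n+1}$. The crucial combinatorial claim is that the resulting multiset of $2n(n-1)$ residues, as $\{i,j\}$ varies, contains each nonzero class of $\Z/(2n+1)\Z$ with multiplicity exactly $n-1$. I would prove this by counting contributions according to the difference $d=j-i\in\{1,\dots,n-1\}$ (each yielding $n-d$ pairs hit as $\pm d$) and the sum $s=i+j\in\{3,\dots,2n-1\}$, then matching multiplicities via the symmetry $r\leftrightarrow 2n+1-r$ which swaps the $(+,+)$ and $(-,-)$ (respectively $(+,-)$ and $(-,+)$) contributions. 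This residue-by-residue bookkeeping is the main obstacle of the proof. Once it is settled, with $\omega={\rm e}^{2\pi {\rm i}/(2n+1)}$,
\[D^{{\rm long}}_{B_n,2}(z)=\prod_{k=1}^{2n}(z^2-\omega^k)^{n-1}=\biggl(\frac{z^{4n+2}-1}{z^2-1}\biggr)^{n-1}.\]

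Finally, assembling the three pieces causes the two $\bigl((z^{4n+2}-1)/(z^2-1)\bigr)^{n-1}$ factors to cancel, leaving
\[\frac{N_{B_n,2}(z)}{D_{B_n,2}(z)}=\frac{(z+1)(z^{4n+2}-1)}{(z-1)(z^{2n+1}+1)}=(z+1)\frac{z^{2n+1}-1}{z-1},\]
which matches (\ref{JB}) and completes the proof.
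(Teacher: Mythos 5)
Your proposal is correct and follows essentially the same route as the paper: take the already-established formula (\ref{JB}) for $\det(zI-J_\gamma(\eta))$ and verify, from the explicit $B_n$ root system with $t=2$, $h^\vee=2n-1$, that $N_{B_n,2}(z)/D_{B_n,2}(z)=(z+1)(z^{2n+1}-1)/(z-1)$. The only difference is that you spell out the multiplicity count for the long-root factor (each nonzero residue mod $2n+1$ occurring $n-1$ times), which the paper simply asserts via its closed form for $D_{B_n,2}(z)$; your computations of $N$, $D^{\rm short}$, and $D^{\rm long}$ all agree with the paper's.
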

		\begin{proof}
		Let $\ep_1,\dots,\ep_n$ be the standard basis of the $n$-dimensional Euclidean space $\R^n$ so that $\pair{\ep_i,\ep_j}=\delta_{i,j}$.
		Since the root system of the type $B_n$ is given by
			\[\Delta=\{\pm\ep_i\pm\ep_j,\, 1\leq i<j\leq n,\, \pm\ep_i,\, 1\leq i\leq n\},\]
		we have
				\[N_{B_n,2}(z)=\frac{\bigl(z^{4n+2}-1\bigr)^n}{(z^2-1)^{n-1}(z-1)},\qquad D_{B_n,2}(z)=\biggl(\frac{z^{4n+2}-1}{z^2-1}\biggr)^{n-1}\cdot\prod_{\substack{1\leq k\leq 2n+1\\
			k\neq n+1}}\bigl(z-\zeta^\frac{2k-1}{2}\bigr).\]
		Therefore, the right-hand side becomes
			\[\frac{N_{B_n,2}(z)}{D_{B_n,2}(z)}=(z+1)\prod_{k=1}^{2n}\bigl(z-\zeta^k\bigr)=(z+1)\frac{z^{2n+1}-1}{z-1}\]
		and coincides with (\ref{JB}).
		\end{proof}

	\subsection[Type D\_n]{Type $\boldsymbol{D_n}$}
The argument for the type $D_n$ is similar to the type $B_n$.
Set $n$ as $2l$.
For the type $D_{2l}$, $l\geq2$, set $y=\bigl(y_1^{(1)},\dots,y_1^{(2l)}\bigr)$.
Now we have the cluster transformation as follows:
		\begin{align*}
		\left( \begin{array}{@{}l@{}}
			y_1^{(2i-1)}\\
			y_1^{(2i)}\\
			y_1^{(2l-2)}\\
			y_1^{(2l-1)}\\
			y_1^{(2l)}
		\end{array} \right)&\overset{\mu_+}{\mapsto}
		\left( \begin{array}{@{}l@{}}
			y_1^{(2i-1)}\bigl(y_1^{(2i-2)}+1\bigr)\bigl(y_1^{(2i)}+1\bigr)\\
			\bigl(y_1^{(2i)}\bigr)^{-1}\\
			\bigl(y_1^{(2l-2)}\bigr)^{-1}\\
			y_1^{(2l-1)}\bigl(y_1^{(2l-2)}+1\bigr)\\
			y_1^{(2l)}\bigl(y_1^{(2l-2)}+1\bigr)
		\end{array} \right)\\
		&\overset{\mu_-}{\mapsto}
		\left( \begin{array}{@{}l@{}}
			\bigl(\mu_+(y)_1^{(2i-1)}\bigr)^{-1}\\
			\mu_+(y)_1^{(2i)}\bigl(\mu_+(y)_1^{(2i-1)}+1\bigr)\bigl(\mu_+(y)_1^{(2i+1)}+1\bigr)\\
			\mu_+(y)_1^{(2l-2)}\bigl(\mu_+(y)_1^{(2l-3)}+1\bigr)\bigl(\mu_+(y)_1^{(2l-1)}+1\bigr)\bigl(\mu_+(y)_1^{(2l)}+1\bigr)\\
			\bigl(\mu_+(y)_1^{(2l-1)}\bigr)^{-1}\\
			\bigl(\mu_+(y)_1^{(2l)}\bigr)^{-1}
		\end{array} \right)
		\end{align*}
for $i=1,\dots,l-1$.
For a solution of the level 2 restricted constant Y-system of type $D_{2l}$, if we take \smash{$\eta=\bigl(\eta_1^{(1)},\dots,\eta_1^{(2l)}\bigr)$} to be
	\begin{alignat*}{4}
 &\eta_1^{(2i-1)}=\bigl(Y_1^{(2i-1)}\bigr)^{-1},\qquad&& \eta_1^{(2i)}=Y_1^{(2i)},\qquad&& 1\leq i\leq l-1, & \\ &\eta_1^{(2l-1)}=\bigl(Y_1^{(2l-1)}\bigr)^{-1},\qquad&&\eta_1^{(2l)}=\bigl(Y_1^{(2l)}\bigr)^{-1},\qquad&& &
 \end{alignat*}
then we find that $\eta$ is the solution of the fixed point equation $\mu_\gamma(y)=y$.
Now, using Proposition~\ref{QY} and~(\ref{SQD}), we are able to calculate the value of \smash{$\bigl\{Y_m^{(i)}\mid(i,m)\in H_2\bigr\}$} satisfying Theorem~\ref{Ysol} as follows:
	\[Y_1^{(i)}=i(i+2),\quad 1\leq i\leq 2l-2,\qquad Y_1^{(2l-1)}=Y_1^{(2l)}=2l-1.\]

For a nonzero complex number $\lambda$, we consider the equations involving $(\phi_i)_{i=1}^{2l}$ such that
	\begin{align}\label{Deq}
	\sum_{j=1}^{2l}J_\gamma(\eta)_{i,j}\phi_j=\lambda\phi_i
	\end{align}
with the boundary condition given by $\phi_0=0$.
In a similar manner to type $B_n$, we derive the relations
	\begin{align}
	\label{Dr1}&(\lambda+1)\phi_{2k-1}=-\frac{\phi_{2k-2}}{(2k-1)^3(2k+1)}-\frac{\phi_{2k}}{(2k-1)(2k+1)^3},\qquad 1\leq k\leq l-1,\\
&\lambda\phi_{2k}=\frac{(k+1)(2k+1)\phi_{2k-2}}{k(2k-1)}+\frac{(k+1)(2k-1)^2(2k+1)^2\phi_{2k-1}}{k}+\frac{(4k(k+1)-1)\phi_{2k}}{4k(k+1)}\nonumber\\
	&\hphantom{\lambda\phi_{2k}=}{}
 +\frac{k(2k+1)^2(2k+3)^2\phi_{2k+1}}{k+1}+\frac{k(2k+1)\phi_{2k+2}}{(k+1)(2k+3)},\qquad 1\leq k\leq l-2 ,\label{Dr2}\\
&\lambda\phi_{2l-2}=\frac{l(2l-1)\phi_{2l-4}}{(l-1)(2l-3)}+\frac{l(2l-3)^2(2l-1)^2\phi_{2l-3}}{l-1}+\frac{(2l-3)\phi_{2l-2}}{l-1}\nonumber\\
	&\hphantom{\lambda\phi_{2l-2}=}{}
 +2(l-1)(2l-1)^2(\phi_{2l-1}+\phi_{2l}),\label{Dr3}\\
	\label{Dr4}&(\lambda+1)\phi_{2l-1,2l}=-\frac{\phi_{2l-2}}{(2l-1)^3}.
	\end{align}
Then we have the following lemma.
		\begin{Lemma}\label{D}
		Set $\phi_{2l}=\phi_{2l-1}=1$.
		Then we have
			\begin{align*}
			&\phi_{2l-2k-1}=\Phi_{2l-2k-1}\bigl((2l-2k+1)\bigl(\lambda^k+\lambda^{-k}\bigr)+2\bigl(\lambda^{k-1}+\lambda^{-(k-1)}\bigr)+\cdots+2\bigr),\\
			&\phi_{2l-2k}=\Phi_{2l-2k}\bigl((l-k+1)\lambda^k+\lambda^{k-1}+\cdots+\lambda^{-(k-2)}+(l-k+1)\lambda^{-(k-1)}\bigr),
			\end{align*}
		 where $\Phi_m$ are rational numbers given by
			\begin{align*}
			&\Phi_{2l-2k-1}=\frac{(l-k)(2l-1)^2}{l(2l-2k-1)^2(2l-2k+1)^2},\\
			&\Phi_{2l-2k}=\frac{(2l-2k+1)(2l-1)^2}{l}.
			\end{align*}
		Furthermore, such numbers satisfy the boundary condition if and only if $\lambda=\zeta^a$ for primitive $2l$-th root of unity $\zeta$ and $a=1,\dots,2l-1$.
		\end{Lemma}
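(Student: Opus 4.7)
The proof plan parallels the proof of Lemma \ref{B} for type $B_n$: given the initial data $\phi_{2l-1}=\phi_{2l}=1$, I would derive the claimed closed-form expressions for each $\phi_m$ by downward induction on the index $m$, using the recurrences (\ref{Dr1})--(\ref{Dr4}), and then read off the boundary condition from the resulting explicit formula.

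The base case uses the ``degenerate'' relation (\ref{Dr4}) to produce $\phi_{2l-2}$ directly from $\phi_{2l-1}$ and $\phi_{2l}$; this is matched against the $k=1$ instance of the Ansatz for $\phi_{2l-2k}$, with $\Phi_{2l-2}=(2l-1)^3/l$. Next, the top-end recurrence (\ref{Dr3}), together with the already-computed values of $\phi_{2l-2},\phi_{2l-1},\phi_{2l}$, determines $\phi_{2l-3}$ uniquely and is to be checked against the $k=1$ instance of the Ansatz for $\phi_{2l-2k-1}$. For the inductive step at level $k$, I would regard (\ref{Dr1}) and (\ref{Dr2}) as a pair of linear equations in the two unknowns $\phi_{2l-2k-1}$ and $\phi_{2l-2k-2}$, with the remaining entries $\phi_{2l-2k},\phi_{2l-2k+1},\phi_{2l-2k+2}$ supplied by the inductive hypothesis. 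Substituting the Ansatz reduces each recurrence to an identity among geometric sums of the form $\lambda^k+\lambda^{k-1}+\cdots+\lambda^{-k}=\lambda^{-k}(\lambda^{2k+1}-1)/(\lambda-1)$ and its variants, which can be verified by comparing the coefficient of each monomial $\lambda^j$ on both sides.

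The rational coefficients $\Phi_{2l-2k-1}$ and $\Phi_{2l-2k}$ are engineered precisely so that these identities close. The clean shape of (\ref{Dr1}) and (\ref{Dr2}) indicates that the required cancellations should fall out once the telescoping structure in $k$ is made explicit, so the algebra, though lengthy, is mechanical.

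The main obstacle I anticipate is the book-keeping in the inductive step, since both (\ref{Dr1}) and (\ref{Dr2}) carry intricate $k$-dependent rational coefficients, and the Laurent-polynomial character of the Ansatz forces simultaneous tracking of several powers of $\lambda$; apart from this, the argument is routine. Once the explicit formulas are established, the boundary condition follows by specializing the Ansatz for $\phi_{2l-2k}$ to $k=l$: the bracketed factor becomes $\lambda^l+\lambda^{l-1}+\cdots+\lambda^{-(l-1)}=\lambda^{-(l-1)}(\lambda^{2l}-1)/(\lambda-1)$. Since $\Phi_0\neq 0$, this vanishes if and only if $\lambda^{2l}=1$ and $\lambda\neq 1$, equivalently $\lambda=\zeta^a$ for a primitive $2l$-th root of unity $\zeta$ and some $a=1,\dots,2l-1$, as claimed.
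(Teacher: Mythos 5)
Your plan is essentially the paper's own proof: solve the relations (\ref{Dr1})--(\ref{Dr4}) downward from $\phi_{2l-1}=\phi_{2l}=1$ exactly as in the type $B_n$ case, and read the constraint off the resulting $\phi_0=\frac{(2l-1)^2}{l}\lambda^{-(l-1)}\frac{\lambda^{2l}-1}{\lambda-1}=0$, which is precisely the boundary condition displayed in the paper and gives $\lambda=\zeta^a$, $a=1,\dots,2l-1$. One small correction to your base case: (\ref{Dr3}) alone does not determine $\phi_{2l-3}$, since $\phi_{2l-4}$ also occurs in it; you must solve (\ref{Dr3}) together with (\ref{Dr1}) at $k=l-1$ as a $2\times2$ system in $\bigl(\phi_{2l-3},\phi_{2l-4}\bigr)$ --- i.e., the same two-equation step you describe for general $k$, with (\ref{Dr3}) playing the role of (\ref{Dr2}), which is only available for $k\leq l-2$.
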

		\begin{proof}
		The proof is parallel to type $B_n$, since we have the relations (\ref{Dr1})--(\ref{Dr4}).
		Then the boundary condition is provided by
			\[\phi_0=\frac{2(2l-1)^2}{2l}\lambda^{-(l-1)}\frac{\lambda^{2l}-1}{\lambda-1}=0.\]
		Therefore, we conclude that these numbers satisfy the boundary condition if and only if $\lambda=\zeta^a$ for $a=1,\dots,2l-1$.
		\end{proof}
		\begin{Remark}\label{spD}
		Now, the numbers $\phi=(0,\dots,0,\phi_{2l-1},\phi_{2l})$ determined from the tuple
\[
(\lambda,\phi_{2l-1},\phi_{2l})=(-1,1,-1)
\]
 also satisfies (\ref{Deq}) and the boundary condition.
		\end{Remark}
By Lemma \ref{D} and Remark \ref{spD}, it is understood that
	\[J_\gamma(\eta)\psi=\lambda\psi\]
when we take the vector \smash{$\psi=\bigl(\psi_1^{(1)},\dots,\psi_1^{(2l)}\bigr)$} to be \smash{$\psi_1^{(i)}=\phi_i$} for $i=1,\dots,2l$.

The quiver $Q(D_{2l-1},2)$, $l\geq3$, is obtained by removing one black vertex at leftmost of $Q(D_{2l},2)$.
Hence in a similar manner to the case $B_n$, the calculations of $\gamma(D_{2l-1},2)$ are parallel to the case~$D_{2l}$.
Thus, we have
	\begin{align}\label{JD}
	\det (zI-J_\gamma(\eta))=(1+z)\frac{z^n-1}{z-1}
	\end{align}
for the mutation loop $\gamma=\gamma(D_n,2)$.
		\begin{Theorem}
For the mutation loop $\gamma=\gamma(D_n,2)$, we have
\[
\det (zI-J_\gamma(\eta))=\frac{N_{D_n,2}(z)}{D_{D_n,2}(z)}.
\]
		\end{Theorem}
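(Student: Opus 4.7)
The plan is to mimic the proof for type $B_n$ given above: since equation~(\ref{JD}) already yields $\det(zI - J_\gamma(\eta)) = (1+z)(z^n-1)/(z-1)$, it suffices to evaluate $N_{D_n,2}(z)/D_{D_n,2}(z)$ explicitly from the root system data of type $D_n$ and verify that the two sides coincide.

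First I would record the relevant data: for $D_n$ all simple roots are long, so $t_i = 1$ for every $i$ and $t = 1$; the dual Coxeter number is $h^\vee = 2n - 2$, hence $t(\ell + h^\vee) = 2n$. This immediately gives
\[N_{D_n,2}(z) = \biggl(\frac{z^{2n}-1}{z-1}\biggr)^n,\]
and since $\Delta_{\rm short} = \emptyset$, one has $D_{D_n,2}(z) = D_{D_n,2}^{\rm long}(z)$.

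Next I would take the standard model $\Delta = \{\pm\ep_i \pm \ep_j : 1 \leq i < j \leq n\}$ with $\rho = \sum_{i=1}^n (n-i)\ep_i$, so that $\pair{\rho, \pm(\ep_i - \ep_j)} = \pm(j-i)$ and $\pair{\rho, \pm(\ep_i + \ep_j)} = \pm(2n - i - j)$. By counting how many roots $\alpha$ yield each residue of $\pair{\rho, \alpha}$ modulo $2n$, I expect the multiplicity of the factor $z - \zeta^k$ (for $\zeta$ a primitive $2n$-th root of unity) in $D_{D_n,2}(z)$ to be $n-1$ when $k$ is a nonzero even index and $n$ when $k$ is odd, modulo a single additional cancellation at $k = n$ where $\zeta^n = -1$. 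This should assemble into the uniform factorization
\[D_{D_n,2}(z) = \frac{(z^n-1)^{n-1}(z^n+1)^n}{(z-1)^{n-1}(z+1)}.\]

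Finally, a direct simplification using $z^{2n} - 1 = (z^n-1)(z^n+1)$ gives
\[\frac{N_{D_n,2}(z)}{D_{D_n,2}(z)} = \frac{(1+z)(z^n-1)}{z-1},\]
which agrees with (\ref{JD}). The main obstacle is the residue bookkeeping in the counting step: the multiplicity pattern depends mildly on the parity of $n$, because $\zeta^n = -1$ is a zero of $z^n - 1$ when $n$ is even but of $z^n + 1$ when $n$ is odd, so one has to verify that the two parity cases coalesce into the single uniform factorization above.
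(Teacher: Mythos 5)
Your proposal is correct and follows essentially the same route as the paper: starting from (\ref{JD}), it computes $N_{D_n,2}(z)$ and $D_{D_n,2}(z)$ from the $D_n$ root data with $t=1$, $h^\vee=2n-2$, and matches the two sides. Your uniform factorization $D_{D_n,2}(z)=(z^n-1)^{n-1}(z^n+1)^n(z-1)^{-(n-1)}(z+1)^{-1}$ is just a repackaging of the paper's multiplicity exponent $n-\theta(k)-\delta_{k,n}$ for $z-\zeta^k$ (and your residue count, including the parity-of-$n$ behaviour at $k=n$, checks out), so the two arguments coincide.
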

		\begin{proof}
Since the root system of the type $D_n$ is given by
			\[\Delta=\{\pm\ep_i\pm\ep_j,\, 1\leq i<j\leq n\},\]
we have
			\[N_{D_n,2}(z)=\biggl(\frac{z^{2n}-1}{z-1}\biggr)^n,\qquad D_{D_n,2}(z)=\biggl(\frac{z^{2n}-1}{z-1}\biggr)^n\cdot\prod_{k=1}^{2n-1}\bigl(z-\zeta^k\bigr)^{-\theta(k)-\delta_{k,n}},\]
where $\theta(k)=1$ (resp.\ $\theta(k)=0$) if $k$ is even (resp.\ odd).
Therefore, the right-hand side becomes
			\[\frac{N_{D_n,2}(z)}{D_{D_n,2}(z)}=(z+1)\prod_{k=1}^{2n-1}\bigl(z-\zeta^k\bigr)^{\theta(k)}=(z+1)\frac{z^n-1}{z-1}\]
and coincides with (\ref{JD}).
		\end{proof}

\subsection[Type C\_n]{Type $\boldsymbol{C_n}$}

In a similar manner to the type $B_n$, set \smash{$y=\bigl(y_1^{(1)},y_2^{(1)},y_3^{(1)},\dots,y_3^{(2l-1)},y_1^{(2l)},y_1^{(2l+1)}\bigr)$} for the type~$C_{2l}$, $l\geq2$, where \smash{$y_1^{(2l)}$} \big(resp.\ \smash{$y_1^{(2l+1)}$}\big) corresponds to the white vertex with label $+$ (resp.~$-$).
Now we have the cluster transformation as follows:
		\begin{align*}
		\left( \begin{array}{@{}l@{}}
			y_{1,3}^{(2i-1)}\\
			y_2^{(2i-1)}\\
			y_{1,3}^{(2i)}\\
			y_2^{(2i)}\\
			y_{1,3}^{(2l-1)}\\
			y_2^{(2l-1)}\\
			y_1^{(2l)}\\
			y_1^{(2l+1)}
		\end{array} \right)&\overset{\mu_+}{\mapsto}
		\left( \begin{array}{@{}l@{}}
			\bigl(y_{1,3}^{(2i-1)}\bigr)^{-1}\\	y_2^{(2i-1)}\bigl(y_2^{(2i-2)}+1\bigr)\bigl(y_2^{(2i)}+1\bigr)\bigl(\bigl(y_1^{(2i-1)}\bigr)^{-1}+1\bigr)^{-1}\bigl(\bigl(y_3^{(2i-1)}\bigr)^{-1}+1\bigr)^{-1}\\
			y_{1,3}^{(2i)}\bigl(y_{1,3}^{(2i-1)}+1\bigr)\bigl(y_{1,3}^{(2i+1)}+1\bigr)\bigl(\bigl(y_2^{(2i)}\bigr)^{-1}+1\bigr)^{-1}\\
			\bigl(y_2^{(2i)}\bigr)^{-1}\\
			\bigl(y_{1,3}^{(2l-1)}\bigr)^{-1}\\		y_2^{(2l-1)}\bigl(y_2^{(2l-2)}+1\bigr)\bigl(y_1^{(2l)}+1\bigr)\bigl(\bigl(y_1^{(2l-1)}\bigr)^{-1}+1\bigr)^{-1}\bigl(\bigl(y_3^{(2l-1)}\bigr)^{-1}+1\bigr)^{-1}\\
			\bigl(y_1^{(2l)}\bigr)^{-1}\\
			y_1^{(2l+1)}\bigl(y_1^{(2l-1)}+1\bigr)\bigl(y_3^{(2l-1)}+1\bigr)
		\end{array} \right)\\
		&\overset{\mu_-}{\mapsto}
		\left( \begin{array}{@{}l@{}}	\mu_+(y)_{1,3}^{(2i-1)}\bigl(\mu_+(y)_{1,3}^{(2i-2)}+1\bigr)\bigl(\mu_+(y)_{1,3}^{(2i)}+1\bigr)\bigl(\bigl(\mu_+(y)_2^{(2i-1)}\bigr)^{-1}+1\bigr)^{-1}\\
			\bigl(\mu_+(y)_2^{(2i-1)}\bigr)^{-1}\\
			\bigl(\mu_+(y)_{1,3}^{(2i)}\bigr)^{-1}\\
			\mu_+(y)_2^{(2i)}\bigl(\mu_+(y)_2^{(2i-1)}+1\bigr)\bigl(\mu_+(y)_2^{(2i+1)}+1\bigr)\\
			 \ \times\bigl(\bigl(\mu_+(y)_1^{(2i)}\bigr)^{-1}+1\bigr)^{-1}\bigl(\bigl(\mu_+(y)_3^{(2i)}\bigr)^{-1}+1\bigr)^{-1} \\
			\mu_+(y)_{1,3}^{(2l-1)}\bigl(\mu_+(y)_{1,3}^{(2l-2)}+1\bigr)\bigl(\bigl(\mu_+(y)_2^{(2l-1)}\bigr)^{-1}+1\bigr)^{-1}\\
			\bigl(\mu_+(y)_2^{(2l-1)}\bigr)^{-1}\\
			\mu_+(y)_1^{(2l)}\bigl(\mu_+(y)_2^{(2l-1)}+1\bigr)\\
			\mu_+(y)_1^{(2l+1)}\bigl(\mu_+(y)_2^{(2l-1)}+1\bigr)
		\end{array} \right)\\
		&\overset{\nu}{\mapsto}
		\left( \begin{array}{@{}l@{}}
			\mu_-\mu_+(y)_{1,2,3}^{(j)}\\
			\mu_-\mu_+(y)_1^{(2l+1)}\\
			\mu_-\mu_+(y)_1^{(2l)}
		\end{array} \right)
		\end{align*}
for $i=1,\dots,l-1$ and $j=1,\dots, 2l-1$.
Here \smash{$y_{2}^{(0)}$} stands for 0.
For a solution of the level 2 restricted constant Y-system of type $C_{2l}$, if we take \smash{$\eta=\bigl(\eta_1^{(1)},\dots,\eta_3^{(2l-1)},\eta_1^{(2l)},\eta_1^{(2l+1)}\bigr)$} to be
	\begin{gather*}\eta_m^{(i)}=\begin{cases}
	Y_m^{(i)},&i+m\equiv0\pmod{2},\\
	\bigl(Y_m^{(i)}\bigr)^{-1},&i+m\equiv1\pmod{2},
	\end{cases}\qquad 1\leq i\leq2l-1,\ m=1,2,3 ,\\
	\eta_1^{(2l)}=Y_1^{(2l)},\qquad \eta_1^{(2l+1)}=\bigl(Y_1^{(2l)}\bigr)^{-1}\bigl(Y_2^{(2l-1)}+1\bigr),
 \end{gather*}
then we find that $\eta$ is the solution of the fixed point equation $\mu_\gamma(y)=y$.
Since the positive real solution of the restricted constant Y-system of type $C_{2l}$ with level 2 is complex (cf.\ (\ref{SQC}) and~(\ref{SQC2})), we will proceed the argument using the notation \smash{$\{Y_m^{(i)}\mid (i,m)\in H_2\}$}.	
	
We are able to decompose the Jacobian $J_\gamma(y)$ into
	\[J_\gamma(y)=(I-E_{6l-2,6l-2}-E_{6l-1,6l-1}+E_{6l-2,6l-1}+E_{6l-1,6l-1})J_\gamma^-(y^\prime)J_\gamma^+(y)\]
by the chain rule, where the leftmost matrix in the right-hand side expresses the action of $\nu$, $E_{i,j}$ is the matrix unit and $y^\prime=\mu_+(y)$.
Denote $\mu_+(\eta)$ by \smash{$\xi=\bigl(\xi_1^{(1)},\dots,\xi_3^{(2l-1)},\xi_1^{(2l)},\xi_1^{(2l+1)}\bigr)$.}
Then we have~\smash{$\xi_m^{(i)}=\bigl(\eta_m^{(i)}\bigr)^{-1}$.}

Let $v_1,\dots,v_{6l-1}$ be the standard basis of $\C^{6l-1}$.
For the vector
	\[\psi=\sum_{1\leq i\leq 2l-1,m=1,2,3}\psi_m^{(i)}v_{3(i-1)+m}+\psi_1^{(2l)}v_{6l-2}+\psi_1^{(2l+1)}v_{6l-1},\]
we set
	\begin{align*}
	&\psi^\prime=J_\gamma^+(\eta)\psi=\sum_{\substack{1\leq i\leq 2l-1\\ m=1,2,3}}{\psi_m^{(i)}}^\prime v_{3(i-1)+m}+{\psi_1^{(2l)}}^\prime v_{6l-2}+{\psi_1^{(2l+1)}}^\prime v_{6l-1},\\
	&\psi^{\prime\prime}=J_\gamma^-(\xi)\psi^\prime=\sum_{\substack{1\leq i\leq 2l-1\\ m=1,2,3}}{\psi_m^{(i)}}^{\prime\prime} v_{3(i-1)+m}+{\psi_1^{(2l)}}^{\prime\prime} v_{6l-2}+{\psi_1^{(2l+1)}}^{\prime\prime} v_{6l-1}.
	\end{align*}
	By direct calculation, we obtain the relations
		\begin{align}
		\label{Cr1}&{\psi_{1,3}^{(2k-1)}}^\prime=-\frac{\psi_{1,3}^{(1)}}{\bigl(Y_1^{(2k-1)}\bigr)^2},\\
		\label{Crp2}&{\psi_2^{(2k-1)}}^{\prime\prime}=-\frac{{\psi_2^{(2k-1)}}^\prime}{\bigl(Y_2^{(2k-1)}\bigr)^2}
		\end{align}
	for $k=1,\dots,l$, and
		\begin{align} \label{Cr2}&{\psi_2^{(2k-1)}}^\prime=Y_2^{(2k-1)}\\
\nonumber&\hphantom{{\psi_2^{(2k-1)}}^\prime=}{}\times
\Biggl(\frac{\psi_2^{(2k-2)}}{Y_2^{(2k-2)}+1}+\frac{\psi_1^{(2k-1)}+\psi_3^{(2k-1)}}{Y_1^{(2k-1)}\bigl(Y_1^{(2k-1)}+1\bigr)} +Y_2^{(2k-1)}\psi_2^{(2k-1)}+\frac{\psi_2^{(2k)}}{Y_2^{(2k)}+1}\Biggr),\\
		\label{Cr3}&{\psi_{1,3}^{(2k)}}^\prime=Y_1^{(2k)}\Biggl(\frac{\psi_{1,3}^{(2k-1)}}{Y_1^{(2k-1)}+1}+Y_1^{(2k)}\psi_{1,3}^{(2k)}+\frac{\psi_2^{(2k)}}{Y_2^{(2k)}\bigl(Y_2^{(2k)}+1\bigr)} +\frac{\psi_{1,3}^{(2k+1)}}{Y_1^{(2k+1)}+1}\Biggr),\\
		\label{Cr4}&{\psi_2^{(2k)}}^\prime=-\frac{\psi_2^{(2k)}}{\bigl(Y_2^{(2k)}\bigr)^2},\\
		\label{Crp1}&{\psi_{1,3}^{(2k-1)}}^{\prime\prime}=Y_1^{(2k-1)}\\
\nonumber&\hphantom{{\psi_{1,3}^{(2k-1)}}^{\prime\prime}=}{}\times
\Biggl(\frac{{\psi_{1,3}^{(2k-2)}}^\prime}{Y_2^{(2k-2)}+1}+Y_1^{(2k-1)}{\psi_{1,3}^{(2k-1)}}^\prime +\frac{{\psi_2^{(2k-1)}}^\prime}{Y_2^{(2k-1)}\bigl(Y_2^{(2k-1)}+1\bigr)}+\frac{{\psi_{1,3}^{(2k)}}^\prime}{Y_1^{(2k)}+1}\Biggr),\\
		\label{Crp3}&{\psi_{1,3}^{(2k)}}^{\prime\prime}=-\frac{{\psi_{1,3}^{(2k)}}^\prime}{\bigl(Y_1^{(2k)}\bigr)^2},\\
		\label{Crp4}&{\psi_2^{(2k)}}^{\prime\prime}=Y_2^{(2k)}\Biggl(\frac{{\psi_2^{(2k-1)}}^\prime}{Y_2^{(2k-1)}+1}+\frac{{\psi_1^{(2k)}}^\prime +{\psi_3^{(2k)}}^\prime}{Y_1^{(2k)}\bigl(Y_1^{(2k)}+1\bigr)}+Y_2^{(2k)}{\psi_2^{(2k)}}^\prime+\frac{{\psi_2^{(2k+1)}}^\prime}{Y_2^{(2k+1)}+1}\Biggr)
		\end{align}
	for $k=1,\dots,l-1$, and
		\begin{align}
		\label{Cr5}&{\psi_2^{(2l-1)}}^\prime=Y_2^{(2l-1)}\\
&\nonumber\hphantom{{\psi_2^{(2l-1)}}^\prime=}{}\times
\Biggl(\frac{\psi_2^{(2l-2)}}{Y_2^{(2l-2)}+1}+\frac{\psi_1^{(2l-1)}+\psi_3^{(2l-1)}}{Y_1^{(2l-1)}\bigl(Y_1^{(2l-1)}+1\bigr)}+Y_2^{(2l-1)}\psi_2^{(2l-1)}+\frac{\psi_1^{(2l)}}{Y_1^{(2l)}+1}\Biggr),\\
		\label{Crp5}&{\psi_{1,3}^{(2l-1)}}^{\prime\prime}=Y_1^{(2l-1)}\Biggl(\frac{{\psi_{1,3}^{(2l-2)}}^\prime}{Y_1^{(2l-2)}+1}+Y_1^{(2l-1)}{\psi_{1,3}^{(2l-1)}}^\prime+\frac{{\psi_2^{(2l-1)}}^\prime}{Y_2^{(2l-1)}\bigl(Y_2^{(2l-1)}+1\bigr)}\Biggr),\\
		\label{Cr6}&{\psi_1^{(2l)}}^{\prime\prime}=\frac{{\psi_2^{(2l-1)}}^\prime}{Y_1^{(2l)}}-\frac{\bigl(Y_2^{(2l-1)}+1\bigr)\psi_1^{(2l)}}{\bigl(Y_1^{(2l)}\bigr)^2},\\
		\label{Cr7}&{\psi_1^{(2l+1)}}^{\prime\prime}=Y_1^{(2l)}\Biggl(\frac{{\psi_2^{(2l-1)}}^\prime}{Y_2^{(2l-1)}+1}+\frac{\psi_1^{(2l-1)}+\psi_3^{(2l-1)}}{Y_1^{(2l-1)}+1}+\frac{Y_1^{(2l)}\psi_1^{(2l+1)}}{Y_2^{(2l-1)}+1}\Biggr).
		\end{align}

Set \smash{$u^{(k)}=v_{3k-2}-v_{3k}$} and \smash{$w^{(k)}=v_{3k-2}+v_{3k}$} for $k=1,\dots, 2l-1$.
We consider the subspaces
	\[U=\Op_{k=1}^{2l-1}\C u^{(k)},\qquad W=\Op_{k=1}^{2l-1}\C w^{(k)}\op\Op_{k=1}^{2l-1}\C v_{3k-1}\op\C v_{6l-2}\op\C v_{6l-1}.\]
Using (\ref{Cr1})--(\ref{Cr7}), we find that these subspace are invariant under the action of $J_\gamma(\eta)$.
Therefore, we have
	\begin{align*}
	J_\gamma(\eta){\bf u}={\bf u}
		\begin{pmatrix}
		\hat{K}&0\\
		0&\hat{L}
		\end{pmatrix}
	\end{align*}
for matrices $\hat{K}\in M_{2l-1}(\C)$ and $\hat{L}\in M_{4l}(\C)$, where \smash{${\bf u}=\bigl(u^{(1)},\dots,u^{(2l-1)},w^{(1)},v_2,\dots,w^{(2l-1)},$} $v_{6l-4},v_{6l-2},v_{6l-1}\bigr)$.

For simplicity, we define the notations \smash{$R_m^{(i)}$} and \smash{$S^{(i)}$} by
	\[R_m^{(i)}=Y_m^{(i)}Y_m^{(i+1)}\bigl(Y_m^{(i)}+1\bigr)^{-1}\bigl(Y_m^{(i+1)}+1\bigr)^{-1},\qquad S^{(i)}=2\bigl(Y_1^{(i)}+1\bigr)^{-1}\bigl(Y_2^{(i)}+1\bigr)^{-1}.\]
By (\ref{Cr1})--(\ref{Cr7}), we obtain the representation matrix $\hat{K}$ by
	\[\hat{K}_{i,j}=\begin{cases}
		-1,& j\equiv0\pmod{2},\ i=j, \\
		Y_1^{(i)}\bigl(Y_1^{(j)}\bigr)^2\bigl(Y_1^{(j)}+1\bigr)^{-1},& j\equiv0\pmod{2},\ i=j\pm1, \\
		-1+R_1^{(j-1)}+\bigl(1-\delta_{j,2l-1}\bigr)R_1^{(j)},& j\equiv1\pmod{2},\ i=j, \\
		-\bigl(Y_1^{(i)}\bigr)^{-1}\bigl(Y_1^{(j)}+1\bigr)^{-1},& j\equiv1\pmod{2},\ i=j\pm1, \\
		Y_1^{(i)}Y_1^{(j\pm1)}\bigl(Y_1^{(j)}+1\bigr)^{-1}\bigl(Y_1^{(j\pm1)}+1\bigr)^{-1},& j\equiv1\pmod{2},\ i=j\pm2,\\
		0,& \text{otherwise}
	\end{cases}\]
for $i,j=1,\dots,2l-1$.
By a similar calculation, we obtain the representation matrix $\hat{L}$ as follows:
	\begin{gather*}
 \hat{L}_{i,j}=\begin{cases}
		-1+R_2^{(\frac{j-2}{2})}+R_2^{(\frac{j}{2})}+S^{(\frac{j}{2})},& i=j,\\
		-\bigl(Y_1^{(\frac{j}{2})}\bigr)^{-1}\bigl(Y_2^{(\frac{j}{2})}\bigr)^{-1}\bigl(Y_2^{(\frac{j}{2})}+1\bigr)^{-1},& i=j-1,\\
		-\bigl(Y_2^{(\frac{i}{2})}\bigr)^{-1}\bigl(Y_2^{(\frac{j}{2})}+1\bigr)^{-1},& i=j\pm2,\\ Y_1^{(\frac{i+1}{2})}\bigl(Y_2^{(\frac{j}{2})}\!+1\bigr)^{-1}\bigl(\bigl(Y_2^{(\frac{i+1}{2})}\!+1\bigr)^{-1}+ Y_1^{(\frac{j}{2})}\bigl(Y_2^{(\frac{j}{2})}\bigr)^{-1}\bigl(Y_1^{(\frac{j}{2})}\!+1\bigr)^{-1}\bigr),\!& i+1=j\pm2,\\
		Y_2^{(\frac{j}{2}\pm1)}Y_2^{(\frac{i}{2})}\bigl(Y_2^{(\frac{j}{2})}+1\bigr)^{-1}\bigl(Y_2^{(\frac{j}{2}\pm1)}+1\bigr)^{-1},& i=j\pm4,
	\end{cases}
 \end{gather*}
for $i,j=1,\dots,4l-2$ such that $j\equiv0\pmod{4}$,
	\begin{gather*}\hat{L}_{i,j}=\begin{cases}
		-1+R_1^{(\frac{j-1}{2})}+(1-\delta_{j,4l-3})R_1^{(\frac{j+1}{2})}+S^{(\frac{j+1}{2})},&i=j,\\
		-2\bigl(Y_1^{(\frac{i}{2})}\bigr)^{-1}\bigl(Y_2^{(\frac{i}{2})}\bigr)^{-1}\bigl(Y_1^{(\frac{i}{2})}+1\bigr)^{-1},&i=j+1,\\
		-\bigl(Y_1^{(\frac{i+1}{2})}\bigr)^{-1}\bigl(Y_1^{(\frac{j+1}{2})}+1\bigr)^{-1},&i=j\pm2,\\
2Y_2^{(\frac{i}{2})}\bigl(Y_1^{(\frac{j+1}{2})}+1\bigr)^{-1} \\
\ \times \bigl(\bigl(Y_1^{(\frac{i}{2})}+1\bigr)^{-1}+Y_2^{(\frac{j+1}{2})}\bigl(Y_1^{(\frac{j+1}{2})}\bigr)^{-1}\bigl(Y_2^{(\frac{j+1}{2})}+1\bigr)^{-1}\bigr),&i-1=j\pm2,\\
		Y_1^{(\frac{j+1}{2}\pm1)}Y_1^{(\frac{i+1}{2})}\bigl(Y_1^{(\frac{j+1}{2})}+1\bigr)^{-1}\bigl(Y_1^{(\frac{j+1}{2}\pm1)}+1\bigr)^{-1},& i=j\pm4
	\end{cases}\end{gather*}
for $i,j=1,\dots,4l-2$ such that $j\equiv1\pmod{4}$,
	\[\hat{L}_{i,j}=\begin{cases}
		-1,& i=j,\\
		Y_1^{(\frac{j}{2})}Y_2^{(\frac{j}{2})}\bigl(Y_2^{(\frac{j}{2})}+1\bigr)^{-1},& i=j-1,\\
		Y_2^{(\frac{i}{2})}\bigl(Y_2^{(\frac{j}{2})}\bigr)^2\bigl(Y_2^{(\frac{j}{2})}+1\bigr)^{-1},& i=j\pm2,
	\end{cases}\]
for $i,j=1,\dots,4l-2$ such that $j\equiv2\pmod{4}$,
	\[\hat{L}_{i,j}=\begin{cases}
		-1,& i=j,\\
		2Y_1^{(\frac{i}{2})}Y_2^{(\frac{i}{2})}\bigl(Y_1^{(\frac{i}{2})}+1\bigr)^{-1},& i=j+1,\\
		Y_1^{(\frac{i+1}{2})}\bigl(Y_1^{(\frac{j+1}{2})}\bigr)^2\bigl(Y_1^{(\frac{j+1}{2})}+1\bigr)^{-1},& i=j\pm2
	\end{cases}\]
for $i,j=1,\dots,4l-2$ such that $j\equiv3\pmod{4}$ and
	\begin{align*}
		&\hat{L}_{4l-1,4l-4}=Y_2^{(2l-1)}Y_1^{(2l)}\bigl(Y_2^{(2l-2)}+1\bigr)^{-1}\bigl(Y_2^{(2l-1)}+1\bigr)^{-1},\\
		&\hat{L}_{4l,4l-4}=Y_2^{(2l-1)}\bigl(Y_1^{(2l)}\bigr)^{-1}\bigl(Y_2^{(2l-2)}+1\bigr)^{-1},\\
		&\hat{L}_{4l-1,4l-3}=2Y_1^{(2l)}\bigl(Y_1^{(2l-1)}+1\bigr)^{-1}\bigl(1+Y_2^{(2l-1)}(Y_1^{(2l-1)}\bigr)^{-1}\bigl(Y_2^{(2l-1)}+1\bigr)^{-1}\bigr),\\
		&\hat{L}_{4l,4l-3}=2Y_2^{(2l-1)}\bigl(Y_1^{(2l-1)}\bigr)^{-1}\bigl(Y_1^{(2l)}\bigr)^{-1}\bigl(Y_1^{(2l-1)}+1\bigr)^{-1},\\
		&\hat{L}_{4l-1,4l-2}=\bigl(Y_2^{(2l-1)}\bigr)^2Y_1^{(2l)}\bigl(Y_2^{(2l-1)}+1\bigr)^{-1},\\
		&\hat{L}_{4l,4l-2}=\bigl(Y_2^{(2l-1)}\bigr)^2\bigl(Y_1^{(2l)}\bigr)^{-1},\\
		&\hat{L}_{4l-4,4l-1}=Y_2^{(2l-2)}Y_2^{(2l-1)}\bigl(Y_2^{(2l-1)}+1\bigr)^{-1}\bigl(Y_1^{(2l)}+1\bigr)^{-1},\\
		&\hat{L}_{4l-3,4l-1}=Y_1^{(2l-1)}\bigl(Y_2^{(2l-1)}+1\bigr)^{-1}\bigl(Y_1^{(2l)}+1\bigr)^{-1},\\
		&\hat{L}_{4l-2,4l-1}=-\bigl(Y_2^{(2l-1)}\bigr)^{-1}\bigl(Y_1^{(2l)}+1\bigr)^{-1},\\
		&\hat{L}_{4l-1,4l-1}=Y_2^{(2l-1)}Y_1^{(2l)}\bigl(Y_2^{(2l-1)}+1\bigr)^{-1}\bigl(Y_1^{(2l)}+1\bigr)^{-1},\\
		&\hat{L}_{4l,4l-1}=Y_2^{(2l-1)}\bigl(Y_1^{(2l)}\bigr)^{-1}\bigl(Y_1^{(2l)}+1\bigr)^{-1}-\bigl(Y_2^{(2l-1)}+1\bigr)\bigl(Y_1^{(2l)}\bigr)^{-2},\\
		&\hat{L}_{4l-1,4l}=\bigl(Y_1^{(2l)}\bigr)^2\bigl(Y_2^{(2l-1)}+1\bigr)^{-1},\\
		&\hat{L}_{i,j}=0,\qquad \text{otherwise}.
	\end{align*}
	
Next we consider the matrix $\hat{K}-\lambda^2I$.
We are able to simplify the matrix $\hat{K}-\lambda^2I$ by the row or column operations taking the following steps:
	\begin{enumerate}\itemsep=0pt
		\item Add \smash{$Y_1^{(2k-1)}\bigl(Y_1^{(2k)}\bigr)^2\bigl(Y_1^{(2k)}+1\bigr)^{-1}$} times of the $2k$-th row to the $(2k-1)$-th row and $\bigl(Y_1^{(2k)}\bigr)^2Y_1^{(2k+1)}\bigl(Y_1^{(2k)}\!+1\bigr)^{-1}$ times of the $2k$-th row to the $(2k+1)$-th row for $k=1,\dots,l-1$.
		\item Multiply the $(2k-1)$-th row by $-\lambda^{-1}$ for $k=1,\dots,l$ and the $2k$-th row by \smash{$-\bigl(Y_1^{(2k)}\bigr)^2$} for $k=1,\dots,l-1$.
		\item Multiply the $2k$-th column by \smash{$\lambda^{-1}\bigl(Y_1^{(2k)}\bigr)^{-2}$} for $k=1,\dots,{l-1}$.
	\end{enumerate}
Denote the matrix obtained by these operations by $K$.
For the matrix $\hat{L}-\lambda^2I$, we apply similar operations.
Namely, the matrix $\hat{L}-\lambda^2I$ is simplified to the matrix $L$ by the following steps:
	\begin{enumerate}\itemsep=0pt
		\item Add \smash{$\bigl(Y_1^{(2k)}\bigr)^2Y_1^{(2k+1)}\bigl(Y_1^{(2k)}+1\bigr)^{-1}$} times of the $(4k-1)$-th row to the $(4k+1)$-th row, $Y_1^{(2k-1)}\bigl(Y_1^{(2k)}\bigr)^2\bigl(Y_1^{(2k)}+1\bigr)^{-1}$ times of the $(4k-1)$-th row to the $(4k-3)$-th row for $k=1,\dots,l-1$ and \smash{$Y_1^{(2k-1)}Y_2^{(2k-1)}\bigl(Y_2^{(2k-1)}+1\bigr)^{-1}$} times of the $(4k-2)$-th row to the $(4k-3)$-th row for $k=1,\dots,l$.
		
\item Add \smash{$\bigl(Y_2^{(2k-1)}\bigr)^2Y_2^{(2k)}\bigl(Y_2^{(2k-1)}+1\bigr)^{-1}$} times of the $(4k-2)$-th row, \smash{$2Y_1^{(2k)}Y_2^{(2k)}\bigl(Y_1^{(2k)}+1\bigr)^{-1}$} times of the $(4k-1)$-th row and \smash{$Y_2^{(2k)}\bigl(Y_2^{(2k+1)}\bigr)^2\bigl(Y_2^{(2k+1)}+1\bigr)^{-1}$} times of the $(4k+2)$-th row to the $4k$-th row for $k=1,\dots, l-1$.
		
\item Add \smash{$\bigl(Y_2^{(2l-1)}\bigr)^2Y_1^{(2l)}\bigl(Y_2^{(2l-1)}+1\bigr)^{-1}$} times of the $(4l-2)$-th row to the $(4l-1)$-th row and $\bigl(Y_2^{(2l-1)}\bigr)^2\bigl(Y_1^{(2l)}\bigr)^{-1}$ times of the $(4l-2)$-th row to the $4l$-th row.
		
\item Add \smash{$\bigl(Y_1^{(2l)}\bigr)^2\bigl(Y_2^{(2l-1)}+1\bigr)^{-1}$} times of the $4l$-th row (resp.\ $(4l-1)$-th column) to the $(4l-1)$-th row (resp.\ $4l$-th column).
		
\item Multiply the $(4k-3)$-th row by $-\lambda^{-1}$, the $(4k-2)$-th row by \smash{$-\bigl(Y_2^{(2k-1)}\bigr)^2$} for $k=1,\dots,l$, the $(4k-1)$-th row by \smash{$-\bigl(Y_1^{(2k)}\bigr)^2$}, the $4k$-th row by $-\lambda^{-1}$ for $k=1,\dots,l-1$, and the~${(4l-1)}$-th row by $-\lambda^{-1}$, the $4l$-th row by \smash{$-\bigl(Y_1^{(2l)}\bigr)^2$}.
		
\item Multiply the $(4k-2)$-th column by \smash{$\lambda^{-1}\bigl(Y_2^{(2k-1)}\bigr)^{-2}$} for $k=1,\dots,l$, the $(4k-1)$-th column by \smash{$\lambda^{-1}\bigl(Y_1^{(2k)}\bigr)^{-2}$} for $k=1,\dots,l-1$, and the $4l$-th column by \smash{$\lambda^{-1}\bigl(Y_1^{(2l)}\bigr)^{-2}$}.
	\end{enumerate}
	
The quiver $Q(C_{2l-1},2)$, $l\geq2$, is obtained by removing the three leftmost black vertices from~$Q(C_{2l},2)$.
Hence, in a similar manner to the case $B_n$, the calculations of $\gamma(C_{2l-1},2)$ are parallel to the case $C_{2l}$.

From now, we consider the mutation loop $\gamma=\gamma(C_n,2)$ and variables $y=\bigl(\smash{y_1^{(1)},\dots,y_3^{(n-1)}},\allowbreak \smash{y_1^{(n)},y_1^{(n+1)}}\bigr)$.
The matrices $K\in M_{n-1}(\C)$ and $L\in M_{2n}(\C)$ are given as follows:
	\[K_{i,j}=\begin{cases}
	\La,& j=i,\\
	Y_1^{(i)}\bigl(Y_1^{(j)}+1\bigr)^{-1},& j=i\pm1 ,\\
	0,& \text{otherwise}
	\end{cases}\]
for $i,j=1,\dots,n-1$,
	\[L_{i,j}=\begin{cases}
		\La,& j=i,\\
		Y_1^{(\frac{j}{2})}\bigl(Y_2^{(\frac{j}{2})}\bigr)^{-1}\bigl(Y_2^{(\frac{j}{2})}+1\bigr)^{-1},& i\equiv1\pmod{2},\ j=i+1,\\
		Y_1^{(\frac{i+1}{2})}\bigl(Y_1^{(\frac{j+1}{2})}+1\bigr)^{-1},& i\equiv1\pmod{2},\ j=i\pm2,\\
		2Y_2^{(\frac{i}{2})}\bigl(Y_1^{(\frac{i}{2})}\bigr)^{-1}\bigl(Y_1^{(\frac{i}{2})}+1\bigr)^{-1},& i\equiv0\pmod{2},\ j=i-1,\\
		Y_2^{(\frac{i}{2})}\bigl(Y_2^{(\frac{j}{2})}+1\bigr)^{-1},& i\equiv0\pmod{2},\ j=i\pm2,\\
		0,& \text{otherwise}
	\end{cases}\]
for $i,j=1,\dots,2n-2$ and
	\begin{align*}
	&L_{2n-1,2n-3}=-2\lambda^{-1}Y_1^{(n)}\bigl(Y_1^{(n-1)}+1\bigr)^{-1},\\
	&L_{2n-1,2n-2}=2Y_1^{(n)}\bigl(Y_2^{(n-1)}+1\bigr)^{-1},\\
	&L_{2n,2n-2}=\lambda Y_1^{(n)},\\
	&L_{2n-2,2n-1}=Y_2^{(n-1)}\bigl(Y_1^{(n)}+1\bigr)^{-1},\\
	&L_{2n-1,2n-1}=L_{2n,2n}=\Lambda,\\
	&L_{2n,2n-1}=Y_2^{(n-1)}+1,\\
	&L_{2n-2,2n}=\lambda^{-1}Y_2^{(n-1)}\bigl(Y_2^{(n-1)}+1\bigr)^{-1}\bigl(Y_1^{(n)}+1\bigr)^{-1},\\
	&L_{2n-1,2n}=\bigl(Y_2^{(n-1)}+1\bigr)^{-1},\\
	&L_{i,j}=0,\qquad \text{otherwise} ,
	\end{align*}
where $\Lambda=\lambda+\lambda^{-1}$.
Now, we have the following conjecture.
	\begin{Conjecture}\label{Csol}
	We have
		\begin{enumerate}\itemsep=0pt
		\item[$1)$] $\det (K)=\prod_{i=1}^{n-1}\Bigl(\Lambda-2\cos{\frac{(2i+3)\pi}{4(n+3)}}\Bigr)$,
		\item[$2)$] $\det (L)=\prod_{i=1}^{n-2}\Bigl(\La-2\cos{\frac{(i+2)\pi}{2(n+3)}}\Bigr)^2\prod_{j\in\{1,2,n+1,n+2\}}\Bigl(\La-2\cos{\frac{j\pi}{2(n+3)}}\Bigr)$.
		\end{enumerate}
	\end{Conjecture}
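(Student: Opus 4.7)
The plan is to treat the two parts of the conjecture separately, exploiting the tridiagonal form of $K$ and the period-four block structure of $L$. Recall that the conjecture describes $\det(K)$ and $\det(L)$ as polynomials in $\Lambda=\lambda+\lambda^{-1}$, with roots at cosines of specific rational multiples of $\pi$, so the strategy is to match the characteristic recurrences of the two matrices to the recurrences for the appropriate orthogonal polynomial families using the explicit values $Y_m^{(i)}$ from Proposition~\ref{QY} together with (\ref{SQC}) and (\ref{SQC2}).

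For part (1), $K$ is tridiagonal with constant diagonal $\Lambda$ and off-diagonal entries $K_{i,j}=Y_1^{(i)}(Y_1^{(j)}+1)^{-1}$ for $j=i\pm1$. First I would apply a diagonal similarity $K\mapsto D^{-1}KD$, choosing $d_i$ so that $d_i^2/d_{i+1}^2=Y_1^{(i)}(Y_1^{(i)}+1)/[Y_1^{(i+1)}(Y_1^{(i+1)}+1)]$; this symmetrizes $K$ to a Jacobi matrix with diagonal $\Lambda$ and off-diagonal $\beta_i=\sqrt{Y_1^{(i)}Y_1^{(i+1)}/[(Y_1^{(i)}+1)(Y_1^{(i+1)}+1)]}$. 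Its determinant satisfies the standard three-term recurrence $\Delta_k=\Lambda\Delta_{k-1}-\beta_{k-1}^2\Delta_{k-2}$. Substituting the explicit $Y_1^{(i)}$ for $C_n$ converts $\beta_k^2$ into a ratio of products of $s(x)=\sin\tfrac{\pi x}{n+3}$; the remaining task is to apply sine product-to-sum identities to recognize this as the recurrence of a Chebyshev-type family whose zeros lie at the prescribed $2\cos\tfrac{(2i+3)\pi}{4(n+3)}$.

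For part (2), the matrix $L$ is not tridiagonal, but the case distinction $j\bmod 4$ in its definition shows a period-four band structure that decouples the odd-$m$ and even-$m$ variables $\psi_m^{(i)}$, together with a $4\times 4$ corner handling the two white vertices $y_1^{(n)},y_1^{(n+1)}$. The squared factors $\prod_{i=1}^{n-2}(\Lambda-2\cos\tfrac{(i+2)\pi}{2(n+3)})^2$ strongly suggest that, up to a manageable lower-order coupling, $L$ decomposes into two congruent tridiagonal blocks of size $n-2$ corresponding to the two parities, while the remaining four linear factors for $j\in\{1,2,n+1,n+2\}$ come from the boundary block. My plan is to perform a parity-based row-column permutation to expose this block structure, eliminate the residual coupling by row operations analogous to those already used in Section~4.3, and then apply the symmetrization-plus-recurrence technique from part (1) to each bulk block; the $4\times 4$ boundary block should yield the four missing linear factors by direct expansion.

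The main obstacle will be the trigonometric simplification. In the $B_n$ and $D_n$ cases treated earlier, the $Y_m^{(i)}$ are integers, so matching the Jacobian determinant to the spectral product is essentially a bookkeeping exercise; for $C_n$ the values $Y_1^{(i)}$ are nontrivial sine products and $Y_2^{(i)}$ is an additional finite sum of sine triples, for which no closed form is stated. Reducing $\beta_k^2$ to the coefficients of the predicted orthogonal polynomials will likely require either a new identity collapsing the sum in~(\ref{SQC2}), or an inductive device in $n$ that bypasses this sum altogether—probably the former, since the rank $n$ enters the argument of $s$ nonlocally. A secondary difficulty is ensuring that the parity-based decomposition of $L$ really cleanly separates; the $4\times 4$ corner and the two white vertices may introduce cross-terms that need to be eliminated by a more subtle change of basis guided by the Dynkin involution $\nu$.
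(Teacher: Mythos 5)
The statement you are addressing is not proved in the paper at all: it is stated as Conjecture \ref{Csol}, and the $C_n$ case of Mizuno's conjecture is then established only \emph{conditionally} on it. So there is no proof in the paper to compare against, and the relevant question is whether your proposal actually closes the conjecture. It does not: what you have written is a strategy outline whose decisive steps are exactly the ones left open. For part (1), reducing $\det(K)$ to a three-term recurrence $\Delta_k=\Lambda\Delta_{k-1}-\beta_{k-1}^2\Delta_{k-2}$ after symmetrization is routine; the entire content of the conjecture is that, with the $C_n$ values of $Y_1^{(i)}$ (which are only defined implicitly through Proposition \ref{QY} applied to the sine-product expressions (\ref{SQC}), (\ref{SQC2}), and for which the paper records no closed form, let alone for $Y_1^{(i)}+1$), the quantities $\beta_k^2$ are precisely those of a polynomial family with zeros at $2\cos\frac{(2i+3)\pi}{4(n+3)}$. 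You acknowledge that this requires "a new identity collapsing the sum in (\ref{SQC2})" or some inductive device, but you do not produce either, so the claimed identification of the recurrence is unverified.

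For part (2) the gap is larger. The matrix $L$ has nonzero entries at $j=i\pm1$ and $j=i\pm2$ (with the parity-dependent pattern and the $4\times4$ boundary block involving the white vertices), so the two parity classes are genuinely coupled; your assertion that $L$ decomposes, "up to a manageable lower-order coupling," into two congruent tridiagonal blocks of size $n-2$ plus a corner block is inferred backwards from the squared factors in the conjectured answer rather than derived from the matrix. No change of basis achieving this decoupling is exhibited, and without it the argument for the factor $\prod_{i=1}^{n-2}\bigl(\Lambda-2\cos\frac{(i+2)\pi}{2(n+3)}\bigr)^2$ and the four boundary factors does not get off the ground. In short, your plan is a plausible line of attack (tridiagonal recurrences plus trigonometric identities is indeed the natural thing to try), but both of its load-bearing steps --- the sine-product evaluation of the recurrence coefficients and the parity decoupling of $L$ --- are conjectural within your own write-up, so the statement remains exactly as open as it is in the paper.
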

Under the assumption that Conjecture \ref{Csol} is true, we have the following theorem.
	\begin{Theorem}
	If Conjecture~$\ref{Csol}$ holds, we have
		\[\det (zI-J_\gamma(\eta))=\frac{N_{C_n,2}(z)}{D_{C_n,2}(z)}\]
	for the mutation loop $\gamma=\gamma(C_n,2)$.
	\end{Theorem}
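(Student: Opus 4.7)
The plan is to follow the template of the $B_n$ and $D_n$ proofs, now invoking Conjecture~\ref{Csol} to evaluate the two block determinants explicitly. The invariance of the subspaces $U$ and $W$ under $J_\gamma(\eta)$ established above yields the block factorization
\[
\det(zI-J_\gamma(\eta)) = \det(zI-\hat K)\,\det(zI-\hat L),
\]
so the task reduces to identifying each factor on the right in terms of $\det K$ and $\det L$.

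Next I would track the scalar acquired by the row and column operations converting $\hat K-\lambda^2 I$ (resp.\ $\hat L-\lambda^2 I$) to $K$ (resp.\ $L$). For $K$, step~(1) is determinant-preserving, step~(2) contributes $(-\lambda^{-1})^l\prod_{k=1}^{l-1}\bigl(-(Y_1^{(2k)})^2\bigr)$, and step~(3) contributes $\prod_{k=1}^{l-1}\lambda^{-1}(Y_1^{(2k)})^{-2}$; the $Y$-factors cancel and the net scalar is $-\lambda^{-(n-1)}$, so $\det(\hat K-\lambda^2 I)=-\lambda^{n-1}\det K$. The analogous bookkeeping for the six-step reduction of $\hat L-\lambda^2 I$ to $L$ supplies a net factor of the form $c_L\lambda^{-a_L}$, whose exponent $a_L$ and scalar $c_L$ are determined by the pattern of row and column multiplications. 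Setting $z=\lambda^2$ then expresses $\det(zI-\hat K)\det(zI-\hat L)$ as an explicit power of $\lambda$ times $\det K\cdot\det L$.

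I would then insert the products of Conjecture~\ref{Csol} and apply the identity $\Lambda-2\cos\theta=\lambda^{-1}(\lambda-{\rm e}^{{\rm i}\theta})(\lambda-{\rm e}^{-{\rm i}\theta})$ to each factor; the excess powers of $\lambda^{-1}$ are absorbed by the scalar from the previous step, leaving a polynomial of degree $3n-1$ in $z=\lambda^2$. On the other side, I would compute $N_{C_n,2}(z)/D_{C_n,2}(z)$ directly from the root system $\Delta=\{\pm\ep_i\pm\ep_j,\,\pm 2\ep_k\}$ of $C_n$, using $\pair{\rho,2\ep_k}=n-k+1$, $\pair{\rho,\ep_i\pm\ep_j}=((n-i+1)\pm(n-j+1))/2$, and $h^\vee=n+1$, and simplify the quotient via cancellations between the $(z^{2(n+3)}-1)$ factors of $N_{C_n,2}$ and the long/short contributions to $D_{C_n,2}$, in the same spirit as the $B_n$ and $D_n$ calculations.

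The principal obstacle is the final combinatorial matching of roots. Both sides are polynomials of degree $3n-1$, but collating the short-root factors $(z-{\rm e}^{\pm 2\pi {\rm i}\pair{\rho,\alpha}/(n+3)})$ and the doubled long-root factors $(z^2-{\rm e}^{\pm 2\pi {\rm i}\pair{\rho,\alpha}/(n+3)})$ against the $n-1$ factors of $\det K$ and the $2n$ factors of $\det L$ requires a careful parity-based identification. Under $z=\lambda^2$ the angles $(2i+3)\pi/(4(n+3))$, $(i+2)\pi/(2(n+3))$, and $j\pi/(2(n+3))$ supplied by Conjecture~\ref{Csol} should recombine into the $2(n+3)$-th roots of unity appearing in the quotient $N_{C_n,2}/D_{C_n,2}$; verifying this pairing angle-by-angle, with the scalar factors and signs of the earlier steps in hand, constitutes the computational core of the proof.
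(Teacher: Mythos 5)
Your proposal follows essentially the same route as the paper's proof: block factorization of $\det(zI-J_\gamma(\eta))$ via the invariant subspaces, conversion of $\det\bigl(\hat K-\lambda^2I\bigr)$ and $\det\bigl(\hat L-\lambda^2I\bigr)$ into $\det K$ and $\det L$ by tracking the scalars of the listed row/column operations (the paper records these as $(-\lambda)^{-(n-1)}$ and $\lambda^{-2n}$, consistent with your computation for $K$), substitution of Conjecture~\ref{Csol} with $\lambda=z^{1/2}$, and comparison against $N_{C_n,2}(z)/D_{C_n,2}(z)$ computed from the $C_n$ root system in the paper's normalization. The pieces you leave open---the explicit $L$-scalar and the angle-by-angle identification of the resulting roots of unity with the exponent multiset $\mathcal{E}=(2,4,5,6,6,\dots,2n+1,2n+2,2n+4)$---are precisely the computational facts the paper states as its intermediate results, so your plan is sound and matches the published argument.
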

	\begin{proof}
	For the matrices $\hat{K}$, $\hat{L}$, $K$ and $L$, we have the relations
		\[(-\lambda)^{-(n-1)}\det \bigl(\hat{K}-\lambda^2I\bigr)=\det (K),\qquad\lambda^{-2n}\det \bigl(\hat{L}-\lambda^2I\bigr)=\det (L).\]
	Therefore, by Conjecture \ref{Csol}, we have
		\begin{align*}
		\det (zI-J_\gamma(\eta))&=\det \left(\begin{pmatrix}
			zI-\hat{K}&0\\
			0&zI-\hat{L}\end{pmatrix}\right)\\
		&=z^\frac{3n-1}{2}\det (K)\det (L)\\
		&=\prod_k\bigl(z-\zeta^k\bigr),
		\end{align*}
	where $\zeta$ is a primitive $2n+6$-th root of unity and $k$ takes a value in the sequence $\mathcal{E}=(2,4,5,6,6,7,8,8,\dots,2n-1,2n,2n,2n+1,2n+2,2n+4)$.
	Here $\lambda$ stands for \smash{$z^\frac{1}{2}$} in Conjecture~\ref{Csol}.
	On the other hand, since the root system of the type $C_n$ is given by
		\[\Delta=\biggl\{\frac{1}{\sqrt{2}}(\pm\ep_i\pm\ep_j),\, 1\leq i<j\leq n,\, \pm\sqrt{2}\ep_i,\, 1\leq i\leq n\biggr\},\]
	we have
		\begin{gather*}
N_{C_n,2}(z)=\frac{\bigl(z^{2n+6}-1\bigr)^n}{(z^2-1)(z-1)^{n-1}},\\
 D_{C_n,2}(z)=\biggl(\frac{z^{n+3}+1}{z+1}\biggr)\cdot\prod_{1\leq k\leq 2n+5}\bigl(z-\zeta^k\bigr)^{n-2}\prod_{1\leq k\leq4}\bigl(z-\zeta^{\pm k}\bigr).
 \end{gather*}
	Therefore, we find that
		\[\frac{N_{C_n,2}(z)}{D_{C_n,2}(z)}=\biggl(\frac{z^{n+3}-1}{z-1}\biggr)\cdot\prod_{5\leq k\leq 2n+1}\bigl(z-\zeta^k\bigr)\]
	Consequently, we obtain the claim under the assumption.
	\end{proof}
	\begin{Example}
Set $Z$ as \smash{$z^\frac{1}{2}+z^{-\frac{1}{2}}$}.
In fact, Conjecture \ref{Csol} makes it possible for us to calculate the left-hand side of Mizuno's conjecture more easily than with the original $J_\gamma(\eta)$.
		\begin{enumerate}\itemsep=0pt
			\item $\gamma(C_3,2)$:
				\begin{align*}
				\det (zI-J_\gamma(\eta))&{}=z^4\det \left(\begin{pmatrix}
					Z&K_{1,2}\\
					K_{2,1}&Z
				\end{pmatrix}\right)\\
				&\quad{}\times\det \left(\begin{pmatrix}
					Z&L_{1,2}&L_{1,3}&0&0&0\\
					L_{2,1}&Z&0&L_{2,4}&0&0\\
					L_{3,1}&0&Z&L_{3,4}&0&0\\
					0&L_{4,2}&L_{4,3}&Z&L_{4,5}&L_{4,6}\\
					0&0&L_{5,3}&L_{5,4}&Z&L_{5,6}\\
					0&0&0&L_{6,4}&L_{6,5}&Z
				\end{pmatrix}\right).
				\end{align*}
			\item $\gamma(C_4,2)$:
				\begin{align*}
				\det (zI-J_\gamma(\eta))&{}=z^\frac{11}{2}\det \left(\begin{pmatrix}
					Z&K_{1,2}&0\\
					K_{2,1}&Z&K_{2,3}\\
					0&K_{3,2}&Z
				\end{pmatrix}\right)\\
				&\quad{}\times\det \left(\begin{pmatrix}
					Z&L_{1,2}&L_{1,3}&0&0&0&0&0\\
					L_{2,1}&Z&0&L_{2,4}&0&0&0&0\\
					L_{3,1}&0&Z&L_{3,4}&L_{3,5}&0&0&0\\
					0&L_{4,2}&L_{4,3}&Z&0&L_{4,6}&0&0\\
					0&0&L_{5,3}&0&Z&L_{5,6}&0&0\\
					0&0&0&L_{6,4}&L_{6,5}&Z&L_{6,7}&L_{6,8}\\
					0&0&0&0&L_{7,5}&L_{7,6}&Z&L_{7,8}\\
					0&0&0&0&0&L_{8,6}&L_{8,7}&Z
				\end{pmatrix}\right).
				\end{align*}
		\end{enumerate}
	\end{Example}

\subsection*{Acknowledgements}

The author would like to thank his supervisor Masato Okado for helpful comments and discussion on this research.
The author is also grateful to the anonymous referees for their valuable comments and suggestions, which helped to improve the clarity and quality of this paper.
The author was supported by JST, the establishment of university fellowships towards the creation of science technology innovation, Grant Number JPMJFS2138.
This work was partly supported by MEXT Promotion of Distinctive Joint Research Center Program JPMXP0723833165.

\pdfbookmark[1]{References}{ref}
\LastPageEnding

\end{document}